\newtheorem{thm}{Theorem}[section]
\newtheorem{lem}[thm]{Lemma}
\newtheorem{cor}[thm]{Corollary}
\theoremstyle{definition}
\theoremstyle{remark}
\newtheorem*{rem}{Remark}
\numberwithin{equation}{section}
\newcommand{\R}{\mathbf{R}}  
\newcommand{\N}{\mathbf{N}}
\newcommand{\C}{\mathbf{C}}
\newcommand{\HH}{\mathbf{H}}
\newcommand{\Mod}[1]{\ (\textup{mod}\ #1)}
\providecommand{\sgn}{\operatorname{sgn}}
\providecommand{\sgn}{\operatorname{sgn}}
\providecommand{\sym}{\operatorname{sym}}
\DeclareMathOperator{\res}{res}
\def \hyp {{}_2F_1}
\begin{document}

\title[Mean value result for a product of $L$-functions]{A mean value result
for a product of $GL(2)$ and $GL(3)$ $L$-functions}

\author[O. Balkanova]{Olga  Balkanova}
\address{University of Turku, Department of Mathematics and Statistics,
Turku, 20014, Finland}
\email{olgabalkanova@gmail.com}
\thanks{Research of Olga Balkanova is supported by Academy of Finland project
no. $293876$.}

\author[G. Bhowmik]{Gautami Bhowmik}
\address{Laboratoire Painlev\'{e} LABEX-CEMPI, Universit\'{e} Lille 1, 59655
Villeneuve d’Ascq Cedex, France}
\email{bhowmik@math.univ-lille1.fr}

\author[D. Frolenkov]{Dmitry  Frolenkov}
\address{National Research University Higher School of Economics, Moscow,
Russia and Steklov Mathematical Institute of Russian Academy of
Sciences, 8 Gubkina st., Moscow, 119991, Russia}
\thanks{Research of Dmitry Frolenkov is supported by the Russian Science Foundation under grant [14-11-00335] and performed in Khabarovsk Division of the Institute for Applied Mathematics, Far Eastern Branch, Russian Academy of Sciences}
\email{frolenkov@mi.ras.ru}

\author[N. Raulf]{Nicole Raulf}
\address{Laboratoire Painlev\'{e} LABEX-CEMPI, Universit\'{e} Lille 1,
59655 Villeneuve d’Ascq Cedex, France}
\email{nicole.raulf@math.univ-lille1.fr}

\begin{abstract}
In this paper various analytic techniques are combined in order 
to study the average of a product of a Hecke $L$-function and 
a symmetric square $L$-function at the central point in the weight 
aspect. The evaluation of the second main term relies on the theory 
of Maa{\ss} forms of half-integral weight and the Rankin-Selberg 
method. The error terms are bounded using the Liouville-Green 
approximation. 
\end{abstract}

\keywords{symmetric square L-functions; Eisenstein-Maa{\ss} series;
  Maa{\ss} forms of half-integral weight; Rankin-Selberg method;
  Liouville-Green method; WKB approximation.}
\subjclass[2010]{Primary: 11F12; Secondary: 33C05, 34E05, 34E20.}

\maketitle

\tableofcontents


\section{Introduction}

The asymptotic evaluation of moments of $L$-functions is not only 
an important tool to solve problems in number theory and arithmetic 
geometry, but is also a subject of independent interest. Various 
conjectures predict the shape of the main terms for all moments of 
$L$-functions within a family of certain type of symmetry (see e.g.\ 
\cite{CFKRS, DGH}). Even though exact results for moments are till 
now known only for small values this is already sufficient for many 
applications. 
See, for example, \cite{IS}.
In this regard, the quality of the asymptotic error estimates plays 
a crucial role.

In general, there are three main techniques used for determining the 
moments: we can apply the approximate functional equation, the 
Rankin-Selberg method or the method of analytic continuation, each of 
which has certain advantages. The method of the approximate functional 
equation which allows the bypassing of convergence problems
is the most common approach. The method of analytic continuation and 
the Rankin-Selberg method however reveal the structure of the mean 
values and yield exact formulas for the moments. Consequently, the 
theory of special functions can be used in order to prove sharp error 
estimates. 

The problem we consider here requires the combination of all three 
methods. We study the asymptotic 
behaviour of the first moment of the product of the Hecke 
$ L $-function $ L(f, 1/2) $ and the symmetric square $L$-function 
$ L(\sym^2f, 1/2) $ in the weight aspect on average, where $ f $ 
runs over the space $ H_{4k} $ of primitive forms of weight $ 4k $, 
$ k \in \N $.

The most interesting phenomenon of this moment is the presence of the 
non-diagonal main term, which is smaller in size and depends on the 
special value of the double Dirichlet series
\begin{equation} 
L^{-}_{g}(s) = 
\frac{1}{8} \sum_{n<0} \frac{\mathscr{L}_{4n}(1/2)}{|n|^{s+1/2}},
\end{equation}
\begin{equation}\label{dirserL}
\mathscr{L}_n(s) = \frac{\zeta(2s)}{\zeta(s)} \sum_{q=1}^{\infty}
\frac{1}{q^s} \left(\sum_{1\leq t \leq 2q; \, t^2 \equiv n \Mod{4q}}1\right).
\end{equation}

Isolating this non-diagonal main term and applying the Liouville-Green 
method for error estimates, we prove an asymptotic formula with an 
arbitrary power saving error term.

Before stating the main theorem 
we introduce some notation. Let $ \langle f, f\rangle_1 $ be the 
Petersson inner product on the space of level $1$ holomorphic 
modular forms and the standard harmonic weight we denote by 
\begin{equation}
\omega(f) := \frac{\Gamma(2k-1)}{(4\pi)^{2k-1} \langle f, f\rangle_1}. 
\end{equation} 
Then we have the following asymptotic formula. 

\begin{thm} \label{mainthm} 
Let $h \in C_{0}^{\infty}(\R^{+})$ be a non-negative, compactly supported 
function on the interval $ [\theta_1, \theta_2] $, $ \theta_2 > \theta_1 
> 0 $, and
\begin{equation}
\|h^{(n)}\|_1\ll 1 \text{ for all }n \geq 0.
\end{equation}
Then, for any fixed $ A>0 $, we have
\begin{equation} \label{eq:mainformula}
\begin{split} 
\sum_{k} & h\left(\frac{4k}{K}\right) \sum_{f\in H_{4k}} \omega(f) L(f,1/2) 
L(\sym^2f, 1/2) = \\ 
& 
\frac{HK}{4} \zeta(3/2) 
\Biggl(2\log{K} - 3\log{2\pi} - 2 \log{2} + \frac{\pi}{2} + 3 \gamma 
+ \frac{2 \zeta'(3/2)}{\zeta(3/2)}+ \frac{2 H_1}{H} \Biggr) \\ 
& 
+ P_h(1/K) + L^{-}_{g}(1/4) \sqrt{K} Q_{h}(1/K) + O\left(K^{-A}\right),
\end{split} 
\end{equation}
where $ \zeta(s) $ is the Riemann zeta function and $ \gamma $ denotes 
the Euler constant. Furthermore, 
\begin{equation*}
H := \int_{0}^{\infty}h(y)dy, \quad H_1 := \int_{0}^{\infty}h(y)\log{y}dy
\end{equation*}
and $P_h(x)$, $Q_h(x)$ are polynomials in $x$ of degree $A-1$ and $A$, 
respectively, with coefficients depending on the function $h$.
\end{thm}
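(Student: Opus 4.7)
My plan is to apply the approximate functional equation to each $L$-factor, use the Petersson trace formula to evaluate the sum over $f\in H_{4k}$, extract the diagonal contribution as the first main term, and analyse the off-diagonal Kloosterman contribution by identifying the associated quadratic exponential sums with coefficients of an Eisenstein--Maa{\ss} series of weight $1/2$. A contour shift will isolate a residue giving the secondary main term of size $\sqrt K$, while the remaining oscillatory integral will be bounded by $O(K^{-A})$ using the Liouville--Green asymptotics for the Bessel kernel.

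\textbf{Diagonal main term.} First I would apply the approximate functional equations to $L(f,1/2)$ and $L(\sym^2 f,1/2)$, writing their product as $\sum_{m,a,b}\beta_K(m,a,b)\,\lambda_f(m)\lambda_f(a^2)/(b\sqrt{ma})$, where the Dirichlet coefficients of $L(\sym^2 f,s)$ have been expanded via Hecke as $\sum_{ab^2=n}\lambda_f(a^2)$. The smooth cutoffs $\beta_K$ restrict to $m a b^2 \ll K^{5/2+\varepsilon}$, matching the total analytic conductor. Invoking the Petersson trace formula on $\sum_{f\in H_{4k}}\omega(f)\lambda_f(m)\lambda_f(a^2)$, and then summing against $h(4k/K)$, splits the mean value into a Kronecker delta diagonal $m=a^2$ plus a Bessel-weighted Kloosterman piece. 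The diagonal collapses to a double sum over $a,b$ whose Mellin transform has a double pole at the origin built from $\zeta(s)\zeta(2s+1)$-type products; collecting the Laurent expansion reproduces the asserted linear combination of $\log K$, $\gamma$, $\pi/2$, $\log 2\pi$, $\zeta'(3/2)/\zeta(3/2)$ and $H_1/H$ (the last coming from the $\log$-Mellin weight of $h$), and all lower-order contributions go into $P_h(1/K)$.

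\textbf{Secondary main term via half-integral weight forms.} For the Kloosterman piece, after opening the Kloosterman sums and applying Poisson (or Voronoi) summation in $m$, one obtains a generating Dirichlet series whose coefficients are exactly the quadratic counts $\#\{t\pmod{2q}:t^2\equiv n\pmod{4q}\}$ appearing in $\mathscr L_n(s)$. By the Shimura--Kohnen--Zagier correspondence, these assemble (up to elementary factors) into a Fourier coefficient of an Eisenstein--Maa{\ss} series of weight $1/2$ on $\Gamma_0(4)$; its Rankin--Selberg unfolding provides the meromorphic continuation of $\mathscr L_n(s)$ and, summed over negative fundamental discriminants, of $L^-_g(s)$, isolating the special value at $s=1/4$. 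Shifting the inner Mellin contour past the pole arising from $s=1/2$ in $\mathscr L_{4n}(s)$ and combining with the Bessel transform of $h$ at the corresponding argument yields exactly the term $L^-_g(1/4)\sqrt K\,Q_h(1/K)$, with $Q_h$ a polynomial whose coefficients record Mellin derivatives of $h$ at the shifted point.

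\textbf{Error term and the main obstacle.} What remains after subtracting both main terms is an oscillatory expression involving $J_{4k-1}(x)$ against sums of quadratic exponential sums and the smooth weight $h$. The hard step will be to bound it by $O(K^{-A})$ for every fixed $A$: this demands uniform asymptotic control of $J_\nu(x)$ in both $\nu$ and $x$, especially across the transition region $x\approx\nu$, which the Liouville--Green (WKB) method supplies. Substituting the expansion converts the remainder into an integral with an explicit smooth phase, and repeated integration by parts against $h$, whose derivatives are all bounded by the hypothesis $\|h^{(n)}\|_1\ll 1$, delivers arbitrary power saving provided no stationary point of the phase lies in the support of $h$. Verifying this geometric condition, and then carrying the Liouville--Green expansion to sufficiently high order uniformly in $k$, $q$, and $n$ while keeping the polynomials $P_h$ and $Q_h$ consistent across all the contour shifts, is the main technical obstacle of the proof.
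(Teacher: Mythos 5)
Your route (two approximate functional equations plus the Petersson trace formula) is genuinely different from the paper's, which never introduces Kloosterman sums at all: it combines an \emph{exact} formula for the twisted first moment $\sum_{f}\omega(f)\lambda_f(l)L(\sym^2 f,1/2+u)$ (quoted from earlier work of Balkanova--Frolenkov) with a single approximate functional equation for $L(f,1/2)$, so the decomposition into diagonal, non-diagonal and error pieces is handed to you term by term in $l$. More importantly, your sketch has a genuine gap exactly where the second main term must be produced. After your Poisson/Voronoi step the non-diagonal contribution is essentially $\frac{\Gamma(2k-1/4)}{\Gamma(2k+1/4)}\sum_l l^{-3/4}V_k(l)\,\mathscr{L}_{-4l}(1/2)$, and the difficulty is not the continuation of each individual $\mathscr{L}_n(s)$ in $s$ (which is classical) but the meromorphic continuation, in the outer variable, of the double Dirichlet series $L^-_g(s)=\frac18\sum_{n<0}\mathscr{L}_{4n}(1/2)|n|^{-s-1/2}$ past its abscissa of convergence. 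Without that, the $l$-sum can only be estimated termwise via subconvexity for $\mathscr{L}_n(1/2)$, giving a bound of size $K^{3/4+\theta}$ --- precisely the barrier of the standard AFE treatment that this paper is improving on --- rather than the asserted $L^-_g(1/4)\sqrt{K}\,Q_h(1/K)+O(K^{-A})$. The paper obtains the continuation from a corrected version of M\"uller's theorem applied to a weight-$1/2$ Maa{\ss}--Eisenstein series on $\Gamma_0(4)$ (where the pole at $s=1/2$ turns out to be of order two, not one), and it must in addition build zeros into the weight $g(s,u)$ of the approximate functional equation so that the contour shift does not cross that double pole at $s=1/4$; this engineered cancellation is what turns a would-be $K^{3/4}$ term into the genuine $\sqrt{K}$ main term. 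Your attribution of the $\sqrt K$ term to ``the pole at $s=1/2$ in $\mathscr{L}_{4n}(s)$'' is not the actual mechanism: the term is the residue at the origin of a Mellin integral evaluated at the \emph{continued value} $L^-_g(1/4)$, and the $\sqrt K$ arises from averaging $\Gamma(2k-1/4)/\Gamma(2k+1/4)\asymp k^{-1/2}$ over $k\asymp K$.

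On the error term, your claim that repeated integration by parts succeeds ``provided no stationary point of the phase lies in the support of $h$'' is left unverified, and this is not a formality. In the paper's version one must prove the arithmetic lower bound $\arccos\bigl(n/(2\sqrt l)\bigr)\gg 1/\sqrt l$ for non-square $l$ (via $\{2\sqrt l\}\ge 1/(4m^2)$) to guarantee the Bessel arguments stay in the oscillatory range and that Poisson summation in $k$ gains arbitrarily; squares of $l$ need a separate treatment, and the Liouville--Green expansion must be carried to order $N$ depending on $A$. In your formulation the analogous danger is $J_{4k-1}(x)$ entering its transition range $x\approx 4k$: the paper explicitly notes that when the sums are long enough to reach this transition (as for the second moment of $\sym^2$) a third main term appears, so you would need to verify that your lengths $m\,a b^2$ keep you away from it before any power saving can be claimed.
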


This is an example of mixed moments that were previously investigated, for example, in 
\cite{LRW, MS,NMH,RW}.

The proof of Theorem \ref{mainthm} consists of several steps. We start 
by combining the exact formula for the twisted first moment of symmetric 
square $ L $-functions and the approximate functional equation for the Hecke 
$ L $-function. 
Consequently, the average
\begin{equation*} 
\sum_{f\in H_{4k}} \omega(f) L(f,1/2) L(\sym^2f, 1/2) 
\end{equation*} 
splits into a diagonal main term, a non-diagonal main term plus a smaller 
contribution expressed in terms of the special functions
\begin{equation}
\Psi_k(x) :=
x^k \frac{\Gamma(k-1/4) \Gamma(k+1/4)}{\Gamma(2k)}
{}_2F_{1}\left(k-\frac{1}{4}, k+\frac{1}{4}; 2k; x\right),
\end{equation}
\begin{equation}\label{defphi}
\Phi_k(x) :=
\frac{\Gamma(k-1/4) \Gamma(3/4-k)}{\Gamma(1/2)}
{}_2F_{1}\left(k-\frac{1}{4}, \frac{3}{4}-k; 1/2; x\right),
\end{equation}
where $ \Gamma(s) $ is the Gamma function and $ {}_2F_{1}(a,b;c;x) $ is 
the Gauss hypergeometric function.

The diagonal main term is evaluated in Corollary \ref{cor:diagonal}. The 
Rankin-Selberg method serves to isolate the non-diagonal main term, as 
shown in Corollary \ref{cor:nondiagonal}.
Lemma \ref{lem:firsttype} provides an estimate for the error term of the 
first type.
Using the Liouville-Green method, we approximate the error term of the 
second type by the series in $Y_0$ and $J_0$ Bessel functions, average 
the result over $k$ and, as a consequence, prove estimate \eqref{eq:secondtype}.

To sum up, we obtain the full asymptotic expansion with an arbitrary power 
saving error term. This improves the results of \cite{BRS}, where an asymptotic 
formula for
\begin{equation} \label{eq:hybrid} 
\sum_{k} h\left(\frac{4k}{K}\right) \sum_{f\in H_{4k}} \omega(f) L(f,1/2) 
L(\sym^2f, 1/2)
\end{equation} 
was proved with a saving in the logarithmic term of the error using 
standard methods based on the approximate functional equation and 
\cite[Lemma~5.8]{Iw}.

We note that a similar approach allowed Khan \cite{K2} to evaluate 
the second moment
\begin{equation}\label{eq:sym2moment} 
\sum_{k} h\left(\frac{2k}{K}\right) \sum_{f\in H_{2k}} \omega(f) 
L^2(\sym^2f, 1/2)
\end{equation}
with a good power saving error term, namely $O(K^{\epsilon})$ for any 
$ \epsilon > 0 $. This fact can be easily explained by the exact formula 
(see \cite[Theorem~2.1]{BF1}) for the twisted first moment of symmetric 
square $L$-functions associated to $H_{2k}$. The off-diagonal term in this 
formula is 
\begin{equation}\label{eq:ndod}
(-1)^k \frac{\sqrt{2\pi}}{2l^{1/2}} \frac{\Gamma(k-1/4)}{\Gamma(k+1/4)} 
\mathscr{L}_{-4l^2}(1/2).
\end{equation}
To obtain asymptotics for \eqref{eq:sym2moment}, it is required to average 
\eqref{eq:ndod} over all values of $k$. Consequently, the contribution of 
this term is rather small due to the oscillating multiple $(-1)^k$. In the 
case of \eqref{eq:hybrid}, the summation is taken only over even values of 
$k$ because $L(f,1/2)$ is identically zero for $f \in H_{2k}$ with odd $k$. 
As a result, the oscillating multiple $(-1)^k$ disappears and the summand 
corresponding to \eqref{eq:ndod} becomes the second main term.

Note that the absolute value estimate for the non-diagonal main term in 
\eqref{eq:hybrid} is $K^{3/4+\theta}$, where $\theta$ denotes the subconvexity 
exponent for the Dirichlet series \eqref{dirserL}.
Taking the convexity bound $\theta=1/4$, we recover the asymptotic result 
of \cite{BRS}.
Using the best subconvexity result due to Conrey and Iwaniec \cite{CI} gives 
a power saving bound $K^{11/12+\epsilon}$. The Rankin-Selberg method allows 
removing the dependence on $\theta$ and improving the estimate to $K^{3/4}$. 
We finally observe that there are some cancellations between the diagonal 
and the non-diagonal main terms. This can be shown by direct computations 
but the most convenient way is to choose a suitable form of the approximate 
functional equation for $ L(f, s) $. Accordingly, we obtain the second main 
term of size $K^{1/2}$.

In the level aspect similar results for the Hecke congruence subgroup of 
prime level $q$ were obtained by Munshi and Sengupta \cite{MS} with the 
error estimate $O(q^{-1/8})$. The authors of \cite{MS} isolate only the 
diagonal main term.  Our expectation is that there is a second main term 
of size $q^{-1/2}$.

We remark that the additional average over $ k $ in 
\eqref{eq:mainformula} is only required to estimate the error term 
of the second type, which involves the highly oscillatory special function 
$ \Phi_k(x) $. It might be possible to smooth out the oscillations of 
$ \Phi_k(x) $ by using instead the average over $ l $ in \eqref{eq:ET2}, 
and, hence, prove an asymptotic formula for
\begin{equation} \label{hybridmoment}
\sum_{f\in H_{4k}}\omega(f)L(f,1/2)L(\sym^2f, 1/2).
\end{equation}
A consequence of this result is the simultaneous non-vanishing of the 
corresponding $L$-functions.

Finally, the methods of the present work shed some light on the structure of the second moment of symmetric square $L$-functions. In the level aspect the upper bound 
\begin{equation}
\sum_{f\in H_{2k}(N)}L^2(\sym^2f,1/2)\ll N^{1+\epsilon}\text{ for any } \epsilon>0\text{ and fixed }k,
\end{equation}
 was proved by Iwaniec and Michel \cite{IM} in 2001.  In the weight aspect this is one of the most challenging unsolved problems in the theory of $L$-functions. It is believed that finding asymptotic formula or even an upper bound for
\begin{equation}\label{eq:sym2secondmoment}\sum_{f\in H_{2k}}\omega(f)L^2(sym^2f, 1/2) \text{ as }
k\rightarrow \infty,
\end{equation}
is  out of reach by standard tools. 
Our results suggest the following approach. Combining exact formula \eqref{eq:product} and the approximate functional equation for $L(sym^2f, 1/2)$, one can isolate the diagonal and the off-diagonal main terms in \eqref{eq:sym2secondmoment}. The main difference with 
\eqref{hybridmoment} is the presence of the off-off-diagonal main terms in sums \eqref{eq:ET3} and \eqref{eq:ET2}. Technically, this can be explained as follows. 
The approximate functional equation for symmetric square $L$-functions is longer and, therefore, the required range includes a transition point at which special functions $\Phi_k(x)$ and $\Psi_k(x)$ change their behaviour. This  produces the third off-off-diagonal main term and  creates additional technical difficulties in estimating error terms.

\section{Main tools}

\subsection{Theorem of M\"{u}ller}\label{muller}

In this section we keep the notations of \cite{Mu}. If $ \Gamma $ is 
a Fuchsian group of the first kind, we denote by $ \mathscr{F}(\Gamma, 
\chi, \kappa,\lambda) $ the space of all non-holomorphic automorphic 
forms of real weight $ \kappa $, multiplier system $ \chi $ and 
Laplace eigenvalue $ \lambda = 1/2-\rho^2 $, $ \Re{\rho}\geq 0 $. 
For our application later we will only need the case that $ \kappa 
= 1/2 $ and $ \rho = 0 $. If $ \infty $ is a cusp of $ \Gamma $, 
the Fourier-Whittaker expansion of the function $ f \in \mathscr{F}
(\Gamma, \chi, \kappa,\lambda) $ at $ \infty $ is given by
\begin{equation*}
\begin{split}
f(W_{\infty}z) &= A_{\infty,0}(y) + \\
& \quad
\sum_{n \neq 0} a_{\infty,n} W_{(\sgn{n})\frac{\kappa}{2},\rho}
(4\pi|n+\mu_{\infty}|y) \exp(2\pi i(n+\mu_{\infty})x).
\end{split}
\end{equation*}
Here $ W_{\infty} $ is the width of the cusp $ \infty $, $ \mu_{\infty} $
is the cusp parameter and $ W_{(\sgn{n})\frac{\kappa}{2}, \rho} $ denotes
the Whittaker function. Furthermore, the constant term of the Fourier
expression has the following form:
\begin{equation}
A_{\infty,0}(y) =
\begin{cases}
0, & \mu_{\infty}\neq 0 \\
a_{\infty,0}y^{1/2+\rho}+b_{\infty,0}y^{1/2+\rho},
& \mu_{\infty}=0,\ \rho\neq 0 \\
a_{\infty,0}y^{1/2}+b_{\infty,0}y^{1/2}\log{y},
& \mu_{\infty}=0,\ \rho= 0.
\end{cases}
\end{equation}
Let $\Gamma$ and $\widehat{\Gamma}$ be two Fuchsian groups of the 
first kind such that $\infty$ is a cusp of both groups. If the 
Fourier-Whittaker expansions of the functions $ f \in \mathscr{F}
(\Gamma, \chi, \kappa, \lambda) $ and $ g \in \mathscr{F}
(\widehat{\Gamma}, \widehat{\chi},\kappa,\lambda)$ at $ \infty $ 
are given by
\begin{equation*} 
\begin{split} 
f(W_{\infty}z) &= A_{\infty,0}(y) + \\ 
& \quad 
\sum_{n \neq 0} a_{\infty,n} W_{(\sgn{n})\frac{\kappa}{2},\rho}(4\pi
|n+\mu_{\infty}|y) \exp(2\pi i(n+\mu_{\infty})x) \\ 
\intertext{and} \\ 
g(\widehat{W}_{\infty}z) &= \widehat{A}_{\infty,0}(y) + \\ 
& \quad 
\sum_{n \neq 0} \widehat{a}_{\infty,n} W_{(\sgn{n})\frac{\kappa}{2},\rho} 
(4\pi|n+\widehat{\mu}_{\infty}|y)\exp(2\pi i(n+\widehat{\mu}_{\infty})x), 
\end{split} 
\end{equation*} 
the associated Dirichlet series
\begin{equation}
L^{+}(f,s) =\sum_{n>0}\frac{a_{\infty,n}}{(n+\mu_{\infty})^s}, \quad
L^{-}(f,s) =\sum_{n<0}\frac{a_{\infty,n}}{|n+\mu_{\infty}|^s}
\end{equation}
are absolutely convergent for $ \Re{s }> \Re{\rho}+1/2 $. 
For a function $ f $ on the upper half-plane $ \HH $ and $ \kappa \in
\R $ the slash operator is defined by 
\begin{equation}\label{eq:slash}
(f\big|_{\kappa}M)(z) =
\left(\frac{cz+d}{|cz+d|} \right)^{-\kappa}f(M z), \quad
M = 
\begin{pmatrix} 
a & b \\ 
c & d  
\end{pmatrix}
\in \textup{SL}_2(\R).
\end{equation}

We prove the following modification of \cite[Theorem~4.1]{Mu}, which is
required for our application.

\begin{thm}\label{thm:muller}
Assume that $ \rho = 0 $. Suppose that there are constants $ C \in \C $
and $ \gamma \in \R^{+} $ such that for all $ z \in \HH $ we have
\begin{equation}\label{eq:trmuller}
\exp(\pi i\kappa/2)(f\big{|}_{\kappa}J)(z) = Cg(\gamma z), \quad
J = \begin{pmatrix} 0 & -1 \\ 1 & 0  \end{pmatrix}.
\end{equation}
Then the Dirichlet series $ L^{\pm}(f,s) $ and $ L^{\pm}(g,s) $ have a
meromorphic continuation to the entire complex plane. Furthermore,
they have only one pole at $ s = 1/2 $. If $ \kappa = -(1+2n) $,
$ n \in \N_0 $, then the pole is simple. In all other cases
$ L^{\pm}(f, s) $ and $  L^{\pm}(g, s) $ have a pole of order $ 2 $
at $ s = 1/2 $.

Moreover, the Dirichlet series $ L^{\pm}(f,s) $ and $ L^{\pm}(g,s) $
satisfy the functional equations
\begin{multline}\label{eq:FELfg}
L^{\pm}(f,s) =
C (2\pi\delta)^{2s} \Gamma^2\left( \frac{1}{2}-s\right) \\
\times \Biggl(\frac{L^{\pm}(g,-s)}{\pi}
\sin{\pi\left(s\pm\frac{\kappa}{2}\right)}
+ \frac{L^{\mp}(g,-s)}{\Gamma^2(\pm \kappa/2+1/2)}\Biggr),
\end{multline}
where
$
\delta = \sqrt{\gamma /(W_{\infty}\widehat{W}_{\infty})}
$.
\end{thm}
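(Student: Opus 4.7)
The plan is to follow the Mellin--transform method of M\"{u}ller \cite{Mu}, specialized to the case $\rho = 0$. For $\Re s$ sufficiently large I would form the Mellin transform
\[
M(f, s) = \int_0^\infty \bigl(f(W_\infty i y) - A_{\infty, 0}(y)\bigr) y^{s-1}\,dy,
\]
substitute the Fourier--Whittaker expansion term by term, and apply the classical identity
\[
\int_0^\infty W_{\pm \kappa/2, 0}(4\pi n y)\, y^{s-1}\,dy = (4\pi n)^{-s}\,\frac{\Gamma(s+1/2)^2}{\Gamma(s \mp \kappa/2 + 1)}
\]
to express $M(f, s)$ as a weighted sum of $L^+(f, s)$ and $L^-(f, s)$; the same computation for $g$ gives $M(g, s)$ in terms of $L^\pm(g, s)$. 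I would then split $M(f, s)$ at a self--dual cutoff depending on $\gamma$, $W_\infty$, and $\widehat{W}_\infty$, and use the transformation \eqref{eq:trmuller} to rewrite the lower piece as an integral of $g$ over a complementary range. Combining with the upper piece and the analogous splitting for $g$, the surviving integrals are entire in $s$ by the exponential decay of the Whittaker functions, and the only singularities come from boundary contributions produced by the $y^{1/2}$ and $y^{1/2}\log y$ pieces of $A_{\infty, 0}$ and $\widehat{A}_{\infty, 0}$ after the change of variables. These produce a pole at $s = 1/2$ of order at most $2$, reducing to order $1$ precisely when $1/\Gamma^2(\pm \kappa/2 + 1/2)$ vanishes --- that is, when $\kappa = \mp(1 + 2n)$.

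The previous step yields a single identity that determines a weighted combination $L^+(f, s) + (\text{Gamma ratio}) \cdot L^-(f, s)$ in terms of the analogous combination for $g$, whereas \eqref{eq:FELfg} requires separate functional equations for $L^+$ and $L^-$. To disentangle them I would perform a second, independent Mellin computation --- most naturally by slashing $f$ with the reflection $\operatorname{diag}(-1,1)$, which swaps positive and negative Fourier indices and thus produces a different linear combination. Inverting the resulting $2 \times 2$ linear system gives the claimed functional equations, with $\sin \pi(s \pm \kappa/2)/\pi$ and $1/\Gamma^2(\pm\kappa/2 + 1/2)$ emerging from the reflection formula $\Gamma(z)\Gamma(1-z) = \pi/\sin\pi z$ applied to the Gamma ratios at $s$ and at $-s$, and the prefactor $C(2\pi\delta)^{2s}\Gamma^2(1/2 - s)$ arising from the multiplicative constants in \eqref{eq:trmuller} after the change of variables.

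The main obstacle will be this separation step together with the accompanying constant bookkeeping: in particular, verifying that $\delta = \sqrt{\gamma/(W_\infty \widehat{W}_\infty)}$ appears with the correct exponent and that the exceptional weights $\kappa = -(1+2n)$ consistently reduce the pole order from $2$ to $1$ in both separated functional equations. The remaining analytic input --- exponential decay of the Whittaker tail, the precise residue calculation at $s = 1/2$ using the derivative of $(2\pi\delta)^{2s}$ to handle the logarithmic term $b_{\infty, 0} y^{1/2}\log y$ in the $\rho = 0$ constant expansion --- is then a direct computation.
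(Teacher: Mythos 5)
Your overall strategy (Mellin transform of $f$ along the imaginary axis, splitting at a self-dual cutoff, folding the lower range via \eqref{eq:trmuller}) is M\"{u}ller's method, which the paper also relies on; but three of your specific steps fail or are missing, and they are exactly where the paper's actual work lies. First, your ``classical identity'' for $\int_0^\infty W_{\pm\kappa/2,0}(4\pi ny)\,y^{s-1}\,dy$ is wrong: the Mellin transform of a Whittaker function is not a pure ratio of Gamma factors but carries an additional hypergeometric factor ${}_2F_1\left(\tfrac12-\alpha,\tfrac12-\alpha;s+1-\alpha;\tfrac12\right)$ --- this is M\"{u}ller's $\Gamma_{\alpha,0}(s)$; the clean Gamma formula you quote holds only for the Laplace--Mellin transform $\int_0^\infty e^{-y/2}W_{\lambda,\mu}(y)y^{s-1}dy$. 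This is not bookkeeping: the entire content of the theorem --- the order of the pole at $s=1/2$ as a function of $\kappa$, correcting M\"{u}ller's original claim of a simple pole --- is decided by the values of these hypergeometric factors at $s=\pm 1/2$, which the paper evaluates via the contiguous relations \cite[Eq.~9.137.7, 9.137.11]{GR} to get the explicit leading Laurent coefficient $(2\pi)^{1/2}C\delta\widehat b_{\infty,0}/(\sqrt2\,\Gamma(\tfrac{1+\kappa}2))$ and hence the exact criterion $\kappa=-(1+2n)$.

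Second, your mechanism for separating $L^{+}$ from $L^{-}$ by slashing with $\operatorname{diag}(-1,1)$ does not yield an independent linear relation: the reflected function coincides with $f$ on the imaginary axis (the reflection merely sends $a_n\mapsto a_{-n}$ together with $W_{\kappa/2,0}\mapsto W_{-\kappa/2,0}$), so its Mellin transform is the same $M(f,s)$ and the $2\times2$ system you propose is degenerate. The paper, following \cite[Eq.~30, 48, 49]{Mu}, obtains the second combination by applying the Maa{\ss} lowering operator $E_\kappa=y(i\partial/\partial x-\partial/\partial y)+\kappa/2$, whose $x$-derivative genuinely distinguishes the signs of the Fourier indices; the system in $M(f,s)$ and $M(E_\kappa f,s)$ is then inverted. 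Third, you never address the apparent poles at $s=-1/2$ produced by the constant-term integrals $\int y^{1/2}y^{\pm s-1}dy$; showing that these cancel, so that $s=1/2$ is the \emph{only} pole, costs the paper another hypergeometric identity. Relatedly, reading the pole order off the functional equation from the vanishing of $1/\Gamma^2(\pm\kappa/2+1/2)$, as you suggest, is circular: it presupposes regularity and non-vanishing of $L^{\pm}(g,s)$ at $s=-1/2$, which is part of what has to be proved.
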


\begin{rem}
In the original theorem of M\"{u}ller (see \cite[Theorem~4.1]{Mu}) it
was stated that for $ \rho = 0 $ the functions $ L^{\pm}(f,s) $ have
poles of order one at $ s=1/2 $,
which turns out to be true only for certain values of $ \kappa $.
\end{rem}

\begin{proof}
Equation \eqref{eq:FELfg} is a direct result of \cite[Eq.~44]{Mu} if 
we set $ \rho = 0 $. 
We now prove the  meromorphic continuation of $ L^{+}(f,s) $. Let
\begin{multline}
\Gamma_{\alpha,\rho}(s) :=
2^{\alpha}\frac{\Gamma(s+\rho+1/2)\Gamma(s-\rho+1/2)}{\Gamma(s+1-\alpha)} \\
\times{}_2F_{1}\left(1/2+\rho-\alpha,1/2-\rho-\alpha;s+1-\alpha;1/2 \right).
\end{multline}
First we note that by \cite[Eq.~30,~48]{Mu}
\begin{equation} \label{eq:L+}
L^{+}(f,s) =
\frac{(2\pi)^{s}}{2\Gamma^2(s+1/2)} \left(\Gamma_{1-\kappa/2,0}(s)M(f,s) 
- \Gamma_{-\kappa/2,0}(s)M(E_{\kappa}f,s)\right).
\end{equation}
Here we used the Maa{\ss} lowering operator 
$ 
E_{\kappa}: \mathscr{F}(\Gamma, \chi, \kappa, \lambda) \rightarrow 
\mathscr{F}(\Gamma, \chi, 
$ $ 
\kappa-2, \lambda) 
$ 
which is defined by
\begin{equation}\label{eq:MLO}
E_{\kappa} =
y\left( i\frac{\partial}{\partial x}-\frac{\partial}{\partial y}\right)
+\frac{\kappa}{2}.
\end{equation}
According to \cite[Eq.~52]{Mu} we have
\begin{equation} \label{eq:Mu52}
\begin{split}
M(f,s) &=
C \delta \frac{\widehat{b}_{\infty,0}}{(s-1/2)^2}
+ C \delta\frac{\widehat{a}_{\infty,0}
+ 2 \log{\delta} \, \widehat{b}_{\infty,0}}{s-1/2} \\
& \quad
+ \frac{b_{\infty,0}}{(s+1/2)^2}-\frac{a_{\infty,0}}{s+1/2}+\text{entire function}.
\end{split}
\end{equation}
Furthermore, using \cite[Eq.~53]{Mu} we obtain
\begin{equation*}
\begin{split}
M(E_{\kappa}f,s) &=
\int_{0}^{\infty} \left(E_{\kappa}f(W_{\infty}yi) - E_{\kappa}A_{\infty,0}(y)\right)
y^{s-1}dy \\
&=
-C \delta^{2s} \int_{\delta}^{\infty} \left( E_{\kappa}g(\widehat{W}_{\infty}yi) -
E_{\kappa}\widehat{A}_{\infty,0}(y)\right)y^{-s-1}dy \\
& \quad
+ \int_{\delta}^{\infty} \left(E_{\kappa}f(W_{\infty}yi) 
- E_{\kappa}A_{\infty,0}(y)\right)
y^{s-1}dy \\
& \quad
- C \delta^{2s} \int_{\delta}^{\infty} E_{\kappa} \widehat{A}_{\infty,0}(y) y^{-s-1} dy
- \int_{0}^{\delta}E_{\kappa}A_{\infty,0}(y)y^{s-1}dy.
\end{split}
\end{equation*}
The first two integrals are entire functions.
Applying the Maa{\ss} lowering operator defined by \eqref{eq:MLO}, we get
\begin{equation*}
E_{\kappa}A_{\infty,0}(y) =
a_{\infty,0} \frac{\kappa-1}{2} y^{1/2} + b_{\infty,0}\frac{\kappa-1}{2}y^{1/2}\log{y}
- b_{\infty,0} y^{1/2}.
\end{equation*}
This implies that
\begin{equation} \label{eq:Mekf}
\begin{split}
M(E_{\kappa}f,s) &=
- \frac{C\delta}{(s-1/2)^2}\widehat{b}_{\infty,0}\frac{\kappa-1}{2}
+ \frac{1}{(s+1/2)^2}b_{\infty,0}\frac{\kappa-1}{2} \\
& \quad
- \frac{C\delta}{s-1/2} \left(\widehat{a}_{\infty,0} \frac{\kappa-1}{2}
+ \widehat{b}_{\infty,0}(\kappa-1)\log{\delta} - \widehat{b}_{\infty,0}\right) \\
& \quad
-\frac{1}{s+1/2}\left(a_{\infty,0}\frac{\kappa-1}{2}-b_{\infty,0} \right)
+ \text{entire function}.
\end{split}
\end{equation}
In order to determine the analytic behaviour of $ L^{+}(f, s) $ we
analyze the function
\begin{equation*}
F(s, \kappa) :=
\frac{1}{\Gamma^2(s+\frac{1}{2})} \left(\Gamma_{1-\kappa/2, 0}(s) +
\frac{\kappa - 1}{2} \Gamma_{-\kappa/2, 0}(s)\right).
\end{equation*}
Using \cite[Eq.~22]{Mu} and \cite[Eq.~9.137.11]{GR} this expression can
be simplified to
\begin{equation} \label{F}
\begin{split}
F(s, \kappa) &=
\frac{2^{1-\kappa/2}}{\Gamma(s+\frac{\kappa}{2})}
\Bigg(\hyp\left(\frac{\kappa-1}{2}, \frac{\kappa-1}{2}; s+\frac{\kappa}{2};
\frac{1}{2}\right) \\
& \quad
+ \frac{\kappa-1}{2} \frac{1}{2(s+\frac{\kappa}{2})} \hyp\left(\frac{\kappa-1}{2}
+1, \frac{\kappa-1}{2}+1; s+\frac{\kappa}{2}+1;\frac{1}{2}\right)\Bigg) \\
&=
\frac{2^{1-\kappa/2}}{\Gamma(s+\frac{\kappa}{2})} \hyp\left(\frac{\kappa-1}{2},
\frac{\kappa+1}{2}; s+\frac{\kappa}{2};\frac{1}{2}\right)
\end{split}
\end{equation}
Thus equations \eqref{eq:L+}, \eqref{eq:Mu52}, \eqref{eq:Mekf} and
\cite[Eq.~22]{Mu} imply
\begin{equation*}
\begin{split}
\lim_{s \rightarrow 1/2} \Big(s-\frac{1}{2}\Big)^2 L^{+}(f, s)
&=
\lim_{s \rightarrow 1/2}
\frac{(2 \pi)^s C \delta \widehat{b}_{\infty,0}}{2} F(s, \kappa) \\
&=
\lim_{s \rightarrow 1/2} \frac{(2 \pi)^s C \delta
\widehat{b}_{\infty,0}}{2^{\kappa/2} \Gamma(s+\frac{\kappa}{2})}
\hyp\left(\frac{\kappa-1}{2}, \frac{\kappa+1}{2}; s+\frac{\kappa}{2};
\frac{1}{2}\right) \\
&=
\frac{(2 \pi)^{1/2} C \delta \widehat{b}_{\infty,0}}{2^{\kappa/2}
\Gamma(\frac{1+\kappa}{2})}
{}_1F_0\left(\frac{\kappa-1}{2}; \frac{1}{2}\right) \\
&=
\frac{(2 \pi)^{1/2} C \delta \widehat{b}_{\infty,0}}{\sqrt{2}
\Gamma(\frac{1+\kappa}{2})}.
\end{split}
\end{equation*}
Therefore, the double pole at $s=1/2$ gets cancelled if and only if
$
1 + \kappa = -2n
$,
$ n \in \N_0 $,
and consequently, the function $L^{+}(f,s)$ has a double pole at $ s= 1/2 $
for $ \kappa \neq -1,-3,-5,-7,\ldots $.
For understanding the behaviour of $ L^{+}(f, s) $ at $ -1/2 $ we look
at the function
\begin{equation*}
G(s, \kappa) := \frac{1}{\Gamma^2(s+\frac{1}{2})}
\left(\Gamma_{1-\kappa/2,0}(s) - \frac{\kappa-1}{2} \Gamma_{-\kappa/2,0}(s)\right).
\end{equation*}
Using \cite[Eq.~9.137.7]{GR} we see that this function is equal to
\begin{equation} \label{G}
\begin{split}
G(s, \kappa) &=
\frac{1}{2^{\kappa/2} \Gamma(s+1+\frac{\kappa}{2})}
\Bigg(2\left(s+\frac{\kappa}{2}\right) \hyp\left(\frac{\kappa-1}{2},
\frac{\kappa-1}{2}; s+\frac{\kappa}{2};\frac{1}{2}\right) \\
&
\qquad
- \frac{\kappa-1}{2} \hyp\left(\frac{\kappa-1}{2}+1,
\frac{\kappa-1}{2}+1; s+\frac{\kappa}{2}+1;\frac{1}{2}\right)\Bigg) \\
&=
\frac{2(s+\frac{1}{2})}{2^{\kappa/2} \Gamma(s+1+\frac{\kappa}{2})}
\hyp\left(\frac{\kappa-1}{2}, \frac{\kappa+1}{2};
s+\frac{\kappa}{2}+1;\frac{1}{2}\right).
\end{split}
\end{equation}
This implies
\begin{equation*}
\begin{split}
\lim_{s \rightarrow -1/2} & \Big(s+\frac{1}{2}\Big) L^{+}(f, s) \\
&=
\lim_{s \rightarrow -1/2} \frac{(2 \pi)^s}{2}
\Bigg(\frac{b_{\infty, 0}}{s+\frac{1}{2}} G(s, \kappa)
-a_{\infty, 0} G(s, \kappa)
- \frac{b_{\infty, 0}}{\Gamma^2(s+\frac{1}{2})} \Gamma_{-\kappa/2,0}(s) \Bigg) \\
&=
\frac{(2 \pi)^{-1/2}b_{\infty, 0}}{2^{1+\kappa/2} \Gamma(\frac{1+\kappa}{2})}
\Bigg(2 \hyp\left(\frac{\kappa-1}{2}, \frac{\kappa+1}{2}; \frac{1+\kappa}{2};
\frac{1}{2}\right) \\ 
& 
\qquad 
\phantom{\frac{(2 \pi)^{-1/2}b_{\infty, 0}}{2^{1+\kappa/2} 
\Gamma(\frac{1+\kappa}{2})}\Bigg(} 
-
\hyp\left(\frac{\kappa+1}{2}, \frac{\kappa+1}{2}; \frac{1+\kappa}{2};
\frac{1}{2}\right)\Bigg) \\
&=
\frac{(2 \pi)^{-1/2}b_{\infty, 0}}{2^{1+\kappa/2} \Gamma(\frac{1+\kappa}{2})}
\Bigg(2^{\frac{\kappa+1}{2}} - 2^{\frac{\kappa+1}{2}}\Bigg) \\
&= 0
\end{split}
\end{equation*}
and therefore the function $L^{+}(f,s)$ can be analytically continued at
$ s = -1/2 $.

\bigskip

In order to prove the meromorphic continuation of $ L^{-}(f, s) $ we remark
that \cite[Eq.~30,~49]{Mu} gives
\begin{equation*}
\begin{split}
\frac{2\Gamma^2(s+1/2)}{(2\pi)^{s}} & L^{-}(f,s) = \\
&
\left(\frac{\kappa-1}{2}\right)^2\Gamma_{\kappa/2-1,0}(s)M(f,s) +
\Gamma_{\kappa/2,0}(s)M(E_{\kappa}f,s).
\end{split}
\end{equation*}
Using the same approach as before we see that the limit
\begin{equation*}
\begin{split}
\lim_{s \rightarrow 1/2} \frac{\kappa - 1}{\Gamma\left(s+2-\frac{\kappa}{2}\right)}
& \Bigg(\frac{1-\kappa}{2}
\frac{1}{2}
\hyp\left(\frac{3-\kappa}{2},\frac{3-\kappa}{2}; s+2-\frac{\kappa}{2};
\frac{1}{2}\right) \\
&
+ \left(s+1-\frac{\kappa}{2}\right)
\hyp\left(\frac{1-\kappa}{2},\frac{1-\kappa}{2}; s+1-\frac{\kappa}{2};
\frac{1}{2}\right)\Bigg)
\end{split}
\end{equation*}
determines the behaviour of $ L^{-}(s, f) $ at $ s = 1/2 $. By
\cite[Eq.~9.137.11]{GR} this limit is equal to
\begin{equation*}
\begin{split}
(1 - \kappa) \lim_{s \rightarrow 1/2} \frac{s+1-\frac{\kappa}{2}}{\Gamma(s+2-
\frac{\kappa}{2})} \hyp\left(\frac{1-\kappa}{2},\frac{3-\kappa}{2};
s+1-\frac{\kappa}{2};\frac{1}{2}\right)
=
\frac{2^{\frac{k+1}{2}}}{\Gamma\left(\frac{1-\kappa}{2}\right)}.
\end{split}
\end{equation*}
Thus the double pole at $ s = 1/2 $ is cancelled if and only if
\begin{equation*}
1-\kappa = -2n, \quad n \in \N_0,
\end{equation*}
which is true in case of odd positive integral weight, namely for
\begin{equation*}
\kappa = 1, 3, 5, 7, \ldots.
\end{equation*}
Applying \cite[Eq.~9.137.7]{GR} we see infer that
\begin{equation*}
\begin{split}
\lim_{s \rightarrow -1/2} & (s+1/2) L^{-}(f, s) = \\
\lim_{s \rightarrow -1/2} &
\frac{(2\pi)^s 2^{\kappa/2-3} (\kappa-1)}{\Gamma(s+2-\frac{\kappa}{2})}
\Bigg(\frac{\kappa-1}{2} \hyp\left(\frac{3-\kappa}{2},\frac{3-\kappa}{2};
s+2-\frac{\kappa}{2};\frac{1}{2}\right) \\
&
+ 2(s+1-\kappa/2) \hyp\left(\frac{1-\kappa}{2},\frac{1-\kappa}{2};
s+1-\frac{\kappa}{2};1/2\right)\Bigg) \frac{b_{\infty, 0}}{s+1/2} \\
&
+
\frac{2^{\kappa/2-1} b_{\infty, 0}}{(2\pi)^{1/2} \Gamma(\frac{1-\kappa}{2})}
\hyp\left(\frac{1-\kappa}{2},\frac{1-\kappa}{2};
\frac{1-\kappa}{2};\frac{1}{2}\right) \\
&=
\lim_{s \rightarrow -1/2} \frac{(2 \pi)^s 2^{\kappa/2-2} (\kappa-1)
b_{\infty, 0}}{\Gamma(s+2-\frac{\kappa}{2})} \hyp\left(\frac{1-\kappa}{2},
\frac{3-\kappa}{2}; s+2-\frac{\kappa}{2};\frac{1}{2}\right) \\
&
\qquad
+ \frac{2^{\kappa-3/2} b_{\infty, 0}}{(2\pi)^{1/2} \Gamma(\frac{1-\kappa}{2})}.
\end{split}
\end{equation*}
Since
\begin{equation*}
\lim_{s \rightarrow -1/2} \frac{(\kappa-1)}{\Gamma(s+2-\frac{\kappa}{2})}
\hyp\left(\frac{1-\kappa}{2},\frac{3-\kappa}{2}; s+2-\frac{\kappa}{2};1/2\right)
=
- \frac{2^{\frac{\kappa+1}{2}}}{\Gamma\left(\frac{1-\kappa}{2}\right)}
\end{equation*}
this implies that $ L^{-}(f, s) $ can be analytically continued in $ s
= -1/2 $.

\bigskip

According to \cite[Eq.~54]{Mu} we have
\begin{equation}
M(g,s) = M(f,-s)\frac{\delta^{2s}}{C}, \quad
M(E_{\kappa}g,s) = M(E_{\kappa}f,-s)\frac{\delta^{2s}}{C}.
\end{equation}

Then applying \cite[Eq.~30,~48,~49,~54]{Mu} we get
\begin{multline*}
\frac{2\Gamma^2(s+1/2)}{(2\pi)^s}L^{+}(g,s) =
\frac{\delta^{2s}}{C}\biggl( \Gamma_{1-\kappa/2,0}(s)M(f,-s) \\
+\Gamma_{-\kappa/2,0}(s)M(E_{\kappa}f,-s)\biggr)
\end{multline*}
and
\begin{multline*}
\frac{2\Gamma^2(s+1/2)}{(2\pi)^s}L^{-}(g,s) =
\frac{\delta^{2s}}{C} \biggl(\frac{(\kappa-1)^2}{4}
\Gamma_{\kappa/2-1,0}(s)M(f,-s) \\
-\Gamma_{\kappa/2,0}(s)M(E_\kappa f,-s)\biggr).
\end{multline*}
Similarly to the case of $ L^{\pm}(f,s) $ it follows that the functions 
$ L^{\pm}(g, s) $ have a double pole at $ s=1/2 $ unless $ \kappa $ is 
an odd negative integer.
\end{proof}

\subsection{Some transformation properties}\label{transf}

The main references for this section are \cite{K,Pit}.
Consider the function  $f \in \mathscr{F}(\Gamma_0(4), \chi, 1/2, 
\lambda) $ with the Fourier-Whittaker expansion
\begin{equation}\label{eq:FEfz}
f(z)=A_0(y)+\sum_{n \neq 0}a_nW_{\sgn(n)/4,\rho}(4\pi |n|y)\exp(2\pi inx),
\end{equation}
where $z=x+iy \in \HH.$
Assume that
\begin{equation}\label{eq:plusspace}
a_n=0\text{ for } n \equiv 2,3 \pmod{4}.
\end{equation}
Furthermore, we define the following operators
\begin{equation}
(f\big|U)(z) = 
\frac{1}{\sqrt{2}} \left(f\left(\frac{z}{4}\right) 
+ f\left(\frac{z+2}{4}\right)\right),
\end{equation}
\begin{equation} \label{eq:trandfW}
(f\big|W)(z) = 
\left(\frac{-iz}{|z|}\right)^{-1/2} f\left(\frac{-1}{4z}\right).
\end{equation}
These operators leave the space $ \mathscr{F}(\Gamma_0(4), \chi, 1/2, 
\lambda)$ stable.
If $ f $ has the expansion \eqref{eq:FEfz}, then the function 
\begin{equation}\label{eq:defg}
g(z):=\frac{1}{\sqrt{2}}(f\big|U)(z)
\end{equation}
satisfies
\begin{equation}
g(z) = 
A_0(y/4)+\sum_{n \neq 0}a_{4n}W_{\sgn(n)/4,\rho}(4\pi |n|y)\exp(2\pi inx).
\end{equation}
By \cite[Prop. ~4.1]{Pit} we have
\begin{equation}\label{eq:tranfuw}
(f\big|U\big|W)(z)=f(z).
\end{equation}

\begin{lem}\label{lem:transf} 
Let $ f $ and $ g $ be as above. For $ z = x+iy \in \HH$ we have
\begin{equation}
\exp{(\pi i/4)} \left(f\big|_{1/2}J\right)(z) = \sqrt{2}g(z/4), 
\quad J = 
\begin{pmatrix} 0 & -1 \\ 1 & 0  
\end{pmatrix}.
\end{equation}
\end{lem}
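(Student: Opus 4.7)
The plan is to derive the transformation law from the inversion identity \eqref{eq:tranfuw}, combined with a careful tracking of the branches of the square roots appearing in the weight $1/2$ slash operator.

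First I would rewrite \eqref{eq:tranfuw} using the definition of $g$ in \eqref{eq:defg}, so that $(f\big|U)(z) = \sqrt{2}\,g(z)$ becomes
\begin{equation*}
f(z) \;=\; (f\big|U\big|W)(z) \;=\; \sqrt{2}\,(g\big|W)(z) \;=\; \sqrt{2}\left(\frac{-iz}{|z|}\right)^{-1/2} g\!\left(\frac{-1}{4z}\right),
\end{equation*}
by applying the definition \eqref{eq:trandfW} of the operator $W$.

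Next I would perform the substitution $z \mapsto -1/w$ with $w \in \HH$, so that $-1/(4z) = w/4$. Solving for $g(w/4)$ yields
\begin{equation*}
g\!\left(\frac{w}{4}\right) \;=\; \frac{1}{\sqrt{2}}\left(\frac{-i(-1/w)}{|{-1/w}|}\right)^{1/2} f\!\left(\frac{-1}{w}\right) \;=\; \frac{1}{\sqrt{2}}\left(\frac{i\,|w|}{w}\right)^{1/2} f\!\left(\frac{-1}{w}\right).
\end{equation*}
The right‑hand side already contains $f(-1/w)$, which is what shows up in $(f\big|_{1/2}J)(w)$ by \eqref{eq:slash}, so only the prefactor needs to be reconciled.

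The main step is the branch computation. Writing $w = r e^{i\theta}$ with $0<\theta<\pi$, the principal branches give $(w/|w|)^{-1/2} = e^{-i\theta/2}$ and $(i\,|w|/w)^{1/2} = (e^{i(\pi/2-\theta)})^{1/2} = e^{i\pi/4}\,e^{-i\theta/2}$, so
\begin{equation*}
\left(\frac{i\,|w|}{w}\right)^{1/2} \;=\; e^{i\pi/4}\left(\frac{w}{|w|}\right)^{-1/2}.
\end{equation*}
Substituting this into the previous display and recognizing $(w/|w|)^{-1/2} f(-1/w) = (f\big|_{1/2}J)(w)$ yields
\begin{equation*}
\sqrt{2}\,g\!\left(\frac{w}{4}\right) \;=\; e^{i\pi/4}\,(f\big|_{1/2}J)(w),
\end{equation*}
which is precisely the claim after relabelling $w$ as $z$. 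The one point that requires care is that both square roots are taken in the principal branch consistently with the definitions of $W$ and of the slash operator; once this is verified the argument is just the chain of identities above, with \eqref{eq:tranfuw} providing the essential content and the branch computation supplying the factor $e^{i\pi/4}$.
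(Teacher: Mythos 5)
Your proposal is correct and follows essentially the same route as the paper: both proofs rest on the identity $(f\big|U\big|W)=f$ from \eqref{eq:tranfuw}, the definitions \eqref{eq:defg} and \eqref{eq:trandfW}, and the branch computation $\left(i|z|/z\right)^{1/2}=e^{i\pi/4}\left(z/|z|\right)^{-1/2}$ for $z\in\HH$; you merely read the chain of identities in the opposite direction (solving for $g(w/4)$ after the substitution $z\mapsto -1/w$ rather than starting from $\sqrt{2}\,g(z/4)$). No gaps.
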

\begin{proof}
Applying equations \eqref{eq:trandfW}, \eqref{eq:tranfuw} and 
\eqref{eq:slash}, we find
\begin{multline*}
\sqrt{2}g(z/4) = (f\big|U)(z/4) 
= i^{1/2}\left(\frac{z}{|z|} \right)^{-1/2}(f\big|U\big|W)(-1/z) =\\
= i^{1/2}\left(\frac{z}{|z|} \right)^{-1/2}f(-1/z) 
= \exp{(\pi i/4)}\left(f\big|_{1/2}J\right)(z).
\end{multline*}
\end{proof}

\subsection{Eisenstein series of weight $1/2$ and level $4$}

The connection between quadratic Dirichlet $L$-functions and Fourier
coefficients of half-integral weight forms was first discovered by
Maa{\ss} in 1937 \cite{M}.  Similar results were obtained by Shimura
\cite{S1}, Shintani \cite{Sh}, Cohen \cite{C}, Goldfeld-Hoffstein
\cite{GH}, and others.

We study the more general Dirichlet series (see \cite{B} for details)
\begin{equation}
\mathscr{L}_n(s) = \frac{\zeta(2s)}{\zeta(s)} \sum_{q=1}^{\infty}
\frac{1}{q^s} \left(\sum_{1\leq t \leq 2q;t^2 \equiv n \Mod{4q}}1\right).
\end{equation}
Only if $ n \equiv 0,1 \Mod{4} $ the function $ \mathscr{L}_n(s) $ 
considered as a function of $ s $ does not vanish.
We can think of $\mathscr{L}_n(s)$ as a certain generalization of the
Riemann zeta function and quadratic Dirichlet $L$-functions. Indeed,
\begin{equation*}
\mathscr{L}_0(s)=\zeta(2s-1).
\end{equation*}
If $ n $ is a fundamental discriminant, then
\begin{equation*}
\mathscr{L}_n(s) = \sum_{q=1}^{\infty}\frac{\chi_n(q)}{q^s},
\end{equation*}
where $ \chi_n $ is a primitive quadratic character mod $ |n| $.
For any $ \epsilon>0 $
\begin{equation}\label{eq:subcL}
\mathscr{L}_{n}(1/2)\ll n^{\theta+\epsilon},
\end{equation}
where $ \theta $ is a subconvexity exponent for Dirichlet $ L $-functions.
The best known result $ \theta = 1/6 $ is due to Conrey and Iwaniec 
\cite{CI}. The Lindel\"{o}f hypothesis asserts that $\theta=0$.
The completed $L$-function
\begin{equation*}
\mathscr{L}_{n}^{*}(s) = 
(\pi/|n|)^{-s/2} \Gamma(s/2+1/4-\sgn{n}/4) \mathscr{L}_n(s)
\end{equation*}
satisfies the functional equation
\begin{equation}
\mathscr{L}_{n}^{*}(s)=\mathscr{L}_{n}^{*}(1-s).
\end{equation}
The function $\mathscr{L}_{n}^{*}(s)$ appears in the Fourier-Whittaker
expansion of the combination of the Maa{\ss}-Eisenstein series of weight
$ 1/2 $ and level $ 4 $ at the cusps $ \infty $ and $ 0 $, namely
\begin{equation}\label{lk}
E^{(1/2)}_{(0; \infty)}(z;s) = \zeta(4s-1) 
\left(E_{\infty}(z;s;1/2) + \frac{1+i}{4^s} E_{0}(z;s;1/2)\right),
\end{equation}
where for a cusp $ \alpha $ the series $E_{\alpha}(z;s;k)$ is defined 
in \cite[Section~3]{PRR}.

\begin{lem}\label{fourexp}
We have
\begin{equation*} 
\begin{split} 
& E^{(1/2)}_{(0; \infty)}(z;s) = 
\zeta(4s-1)y^s + \frac{\sqrt{\pi}\Gamma(2s-1) \zeta(4s-2)}{4^{2s-1}
\Gamma(2s-1/2)}y^{1-s} + \frac{\pi^{2s-3/4}}{4^s\Gamma(2s-1/2)} \\ 
& \quad \phantom{E^{(1/2)}_{(0; \infty)}(z;s) =} 
\times \sum_{n\neq 0}\frac{\mathscr{L}_{n}^{*}(2s-1/2)}{|n|^{3/4}}
W_{\sgn{n}/4,s-1/2}\left(4\pi|n|y\right)\exp{(2\pi i nx)}.
\end{split} 
\end{equation*} 
\end{lem}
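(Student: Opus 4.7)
The plan is to derive the expansion by unfolding each of the two Maa\ss-Eisenstein series $E_\infty(z;s;1/2)$ and $E_0(z;s;1/2)$ separately at the cusp $\infty$, combining them with the weights prescribed in \eqref{lk}, and then identifying the arithmetic factor of each nonzero Fourier coefficient with $\mathscr{L}_n^{*}(2s-1/2)$. First I would write out, using the definition from \cite[Section~3]{PRR}, the standard Bruhat-cell Fourier-Whittaker expansion of each $E_\alpha(z;s;1/2)$, $\alpha\in\{0,\infty\}$, at $\infty$. The $n=0$ term of $E_\infty$ is $y^s$ plus a scattering contribution proportional to $y^{1-s}$, while $E_0$ contributes only to $y^{1-s}$; both $y^{1-s}$-coefficients are built from a Dirichlet series in the weight-$1/2$ Kloosterman sums $S_\chi(0,0;c)$. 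For $n\neq 0$ the coefficient of $\exp(2\pi i n x)$ is the Whittaker function $W_{\sgn(n)/4,\,s-1/2}(4\pi|n|y)$ times a gamma factor coming from the Mellin transform of this Whittaker function and a Dirichlet series $\sum_c c^{-2s}S_\chi^\alpha(n,0;c)$ over those moduli $c$ compatible with the cusp $\alpha$.

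Second, I would form the combination defined in \eqref{lk} and multiply by $\zeta(4s-1)$. The $y^s$ contribution is immediate since only $E_\infty$ produces it. For the $y^{1-s}$ term the factor $(1+i)/4^s$, together with the explicit evaluation of the scattering matrix of weight $1/2$ on $\Gamma_0(4)$, makes the two cusp contributions telescope to the advertised coefficient $\sqrt{\pi}\,\Gamma(2s-1)\zeta(4s-2)/(4^{2s-1}\Gamma(2s-1/2))$; this is essentially the identity between the scattering entries and a completed Riemann zeta value that underpins the functional equation of the weight-$1/2$ Eisenstein system.

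Third, and this is the main step, I would evaluate the combined arithmetic coefficient of $W_{\sgn(n)/4,\,s-1/2}(4\pi|n|y)\exp(2\pi i nx)$ for $n\neq 0$. The weight-$1/2$ Kloosterman sums $S_\chi^\alpha(n,0;c)$ are Sali\'e-type and evaluate explicitly in terms of Gauss sums and counts of square roots of $n$ modulo the relevant moduli. After the change of variables $c\mapsto 4q$ and the alignment of the two cusps provided by $(1+i)/4^s$, the combined Dirichlet series reduces to $\sum_{q\geq 1}q^{-(2s-1/2)}\#\{1\leq t\leq 2q:\,t^2\equiv n\Mod{4q}\}$ up to an extra zeta factor; the prefactor $\zeta(4s-1)$ together with the $\zeta(2s-1/2)^{-1}$ produced by sieving the condition $\gcd(t^2-n,4q)$ reproduces the normalising ratio $\zeta(2(2s-1/2))/\zeta(2s-1/2)$ in \eqref{dirserL}. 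Multiplying by the $n$-dependent gamma factor $\Gamma(s+1/4-\sgn(n)/4)$ and the appropriate power of $\pi/|n|$ then assembles $\mathscr{L}_n^{*}(2s-1/2)/|n|^{3/4}$, matched against the Whittaker normalisation by the explicit prefactor $\pi^{2s-3/4}/(4^s\Gamma(2s-1/2))$. The main obstacle is precisely this last identification: aligning the two half-integral-weight Kloosterman Dirichlet series from the cusps $\infty$ and $0$, with the multiplier $\chi$ carefully tracked, with the classical quadratic-congruence sum defining $\mathscr{L}_n$. The constants $(1+i)$ and $4^{-s}$ in \eqref{lk} are calibrated so that the two contributions add constructively exactly on the moduli $4q$ appearing in $t^2\equiv n\Mod{4q}$; once this is in place, tracking the remaining gamma factors and powers of $\pi$ to recognise the completion $\mathscr{L}_n^{*}$ is routine bookkeeping.
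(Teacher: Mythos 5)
Your outline is correct and is essentially the computation the paper relies on: the paper's own proof is just a citation to \cite[Section~3.1]{PRR} for the expansion of $E_{\infty}(z;s;1/2)$ together with the remark that the computation for $E_{0}(z;s;1/2)$ is analogous, and what you describe (unfolding at $\infty$, evaluating the weight-$1/2$ Sali\'e-type Kloosterman--Dirichlet series, and matching the result with $\mathscr{L}_n^{*}(2s-1/2)$ after forming the combination \eqref{lk}) is precisely that argument written out. No genuinely different route is taken.
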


\begin{proof}
The Fourier-Whittaker expansion for $ E_{\infty}(z;s;1/2) $ is given in
Section $3.1$ of \cite{PRR}. Computations for $E_0(z;s;1/2)$ are similar.
\end{proof}

\noindent 
Computing the limit as $s\rightarrow 1/2$, we find
\begin{multline}
E^{(1/2)}_{(0; \infty)}(z;1/2) = 
\frac{1}{2}y^{1/2}\log{y} + (\gamma-\log{4\pi})y^{1/2}
+\frac{1}{2\sqrt{\pi}} \times \\
\times \sum_{n\neq 0} \frac{\mathscr{L}_{n}(1/2)}{|n|^{1/2}}
\Gamma\left(\frac{1}{2}-\frac{\sgn{n}}{4}\right) 
W_{\sgn{n}/4,0}\left(4\pi|n|y\right) \exp{(2\pi i n x)}.
\end{multline}

Now we can apply the results of Sections \ref{muller} and \ref{transf}.
Note that condition \eqref{eq:plusspace} is satisfied since
$\mathscr{L}_n(1/2)$ vanishes for $n \equiv 2,3 \Mod{4}.$
For $$f(z):=E^{(1/2)}_{(0; \infty)}(z;1/2)$$ define $g(z)$ by equation
\eqref{eq:defg}. Then $ g $ has the expansion 
\begin{multline}
g(z) = \frac{1}{4}y^{1/2}\log{y} + \frac{1}{2}(\gamma-\log{8\pi})y^{1/2}
+ \frac{1}{4\sqrt{\pi}} \times \\
\times \sum_{n\neq 0} \frac{\mathscr{L}_{4n}(1/2)}{|n|^{1/2}}
\Gamma\left(\frac{1}{2}-\sgn{n}\right)
W_{\sgn{n}/4,0}\left(4\pi|n|y\right)\exp{(2\pi i nx)}.
\end{multline}

Now we consider the associated Dirichlet series
\begin{equation}
L^{+}_{f}(s) =
\frac{\Gamma(1/4)}{2\sqrt{\pi}}\sum_{n>0}\frac{\mathscr{L}_{n}(1/2)}{n^{s+1/2}}, 
\quad
L^{-}_{f}(s) =
\frac{\Gamma(3/4)}{2\sqrt{\pi}}\sum_{n<0}\frac{\mathscr{L}_{n}(1/2)}{|n|^{s+1/2}},
\end{equation}
\begin{equation}\label{eq:lg}
L^{+}_{g}(s)=\frac{-1}{2}\sum_{n>0}\frac{\mathscr{L}_{4n}(1/2)}{n^{s+1/2}},
\quad
L^{-}_{g}(s)=\frac{1}{8}\sum_{n<0}\frac{\mathscr{L}_{4n}(1/2)}{|n|^{s+1/2}}.
\end{equation}

Note that by Lemma \ref{lem:transf} equation \eqref{eq:trmuller} is satisfied
for $ \kappa = 1/2 $, $ C = \sqrt{2}$, $ \gamma = 1/4$, and therefore, we can 
apply Theorem \ref{thm:muller}.

\begin{thm}\label{thm:lfuncteq}
The functions $L^{\pm}_{f}(s)$ and $L^{\pm}_{g}(s)$ have a meromorphic
continuation to the whole complex plane and satisfy the functional
equations
\begin{multline}
L^{+}_{g}(s) =
\frac{-\pi^{2s+2}}{\sqrt{2}\Gamma^2(1/2+s)\sin^{2}{\pi s}} \times \\
\left(\frac{\sin \pi(-s-1/4)}{\pi}L^{+}_{f}(-s)
- \frac{L^{-}_{f}(-s)}{\Gamma^2(3/4)} \right),
\end{multline}
\begin{multline}\label{eq:lgs}
L^{-}_{g}(s) = \frac{\pi^{2s+2}}{\sqrt{2}\Gamma^2(1/2+s)\sin^{2}{\pi s}}
\times \\
\left(-\frac{\sin \pi(-s+1/4)}{\pi} L^{-}_{f}(-s)
+ \frac{L^{+}_{f}(-s)}{\Gamma^2(1/4)}\right). 
\end{multline}
Furthermore, $L^{\pm}_{f}(s)$ and $L^{\pm}_{g}(s)$ are holomorphic in $\C$
except for a double pole at $s=1/2$.
\end{thm}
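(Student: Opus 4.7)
The plan is to apply Theorem \ref{thm:muller} to the pair $(f,g)$ constructed earlier in this section. By Lemma \ref{lem:transf}, the transformation identity \eqref{eq:trmuller} is satisfied with $\kappa = 1/2$, $C = \sqrt{2}$, and $\gamma = 1/4$. Since the cusp $\infty$ has width $1$ for $\Gamma_0(4)$, we have $W_\infty = \widehat{W}_\infty = 1$, so $\delta = \sqrt{\gamma/(W_\infty \widehat{W}_\infty)} = 1/2$. Theorem \ref{thm:muller} then delivers the meromorphic continuation of $L^{\pm}_f(s)$ and $L^{\pm}_g(s)$ to all of $\C$; the normalizing constants $\Gamma(1/4)/(2\sqrt{\pi})$, $\Gamma(3/4)/(2\sqrt{\pi})$, $-1/2$, and $1/8$ in the definitions of $L^{\pm}_f$ and $L^{\pm}_g$ are precisely those required to absorb the Gamma-factors $\Gamma(1/2 - \sgn(n)/4)$ and $\Gamma(1/2 - \sgn(n))$ in the Whittaker-Fourier coefficients of $f$ and $g$.

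For the functional equations, I would substitute the parameters above into \eqref{eq:FELfg}, obtaining
\begin{equation*}
L^{\pm}_f(s) = \sqrt{2}\,\pi^{2s}\,\Gamma^2\Bigl(\tfrac{1}{2}-s\Bigr)\Biggl(\frac{L^{\pm}_g(-s)}{\pi}\sin\pi\Bigl(s\pm\tfrac{1}{4}\Bigr) + \frac{L^{\mp}_g(-s)}{\Gamma^2(\pm 1/4 + 1/2)}\Biggr).
\end{equation*}
Replacing $s$ by $-s$ converts this into a $2\times 2$ linear system with unknowns $L^{+}_g(s)$ and $L^{-}_g(s)$. Solving by Cramer's rule and simplifying the determinant using the reflection identity $\Gamma(1/4)\Gamma(3/4) = \pi\sqrt{2}$ (equivalently $\Gamma^2(1/4)\Gamma^2(3/4) = 2\pi^2$) together with product-to-sum identities for the trigonometric factors produces the stated prefactor $\pi^{2s+2}/(\sqrt{2}\,\Gamma^2(1/2+s)\sin^2\pi s)$ and the correct trigonometric coefficients multiplying $L^{\pm}_f(-s)$.

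For the pole structure, the refined analysis inside the proof of Theorem \ref{thm:muller} shows that the a priori double pole at $s=1/2$ of $L^{+}(f,s)$ is cancelled only when $\kappa \in \{-1,-3,-5,\ldots\}$, while that of $L^{-}(f,s)$ is cancelled only when $\kappa \in \{1,3,5,\ldots\}$; holomorphy at $s = -1/2$ was also established there. Since $\kappa = 1/2$ lies in neither exceptional set, both $L^{\pm}_f$ carry genuine double poles at $s=1/2$, and the analogous argument, using the formulas for $L^{\pm}(g,s)$ in terms of $M(f,-s)$ and $M(E_{\kappa}f,-s)$ derived at the end of that proof, yields the corresponding double poles of $L^{\pm}_g$ together with their holomorphy elsewhere in $\C$.

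The main technical obstacle will be the bookkeeping of signs, powers of $\pi$, and Gamma-factors through the inversion: the trigonometric simplification must produce exactly $\sin^2\pi s$ in the denominator, and the coefficients of $L^{\pm}_f(-s)$ must emerge with the signs appearing in the statement, which requires delicate cancellations between the half-integer Gamma-reflection formula and the product-to-sum identities for sines.
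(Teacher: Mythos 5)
Your approach is essentially the paper's own: the authors give no argument beyond the remark preceding the theorem that Lemma \ref{lem:transf} supplies \eqref{eq:trmuller} with $\kappa=1/2$, $C=\sqrt{2}$, $\gamma=1/4$, so that Theorem \ref{thm:muller} applies, and your identification $\delta=1/2$ (hence $(2\pi\delta)^{2s}=\pi^{2s}$) together with the inversion of the resulting $2\times 2$ system is the right way to fill in the details. One warning about the single step you deferred: carrying out the determinant computation, using $\sin(A+B)\sin(A-B)=\sin^2 A-\sin^2 B$ and $\Gamma(1/4)\Gamma(3/4)=\pi\sqrt{2}$, gives
\begin{equation*}
\frac{\sin\pi\bigl(-s+\tfrac14\bigr)\sin\pi\bigl(-s-\tfrac14\bigr)}{\pi^2}-\frac{1}{\Gamma^2(1/4)\Gamma^2(3/4)}
=\frac{\sin^2\pi s-\tfrac12}{\pi^2}-\frac{1}{2\pi^2}
=-\frac{\cos^2\pi s}{\pi^2},
\end{equation*}
so the prefactor emerges as $\pi^{2s+2}/(\sqrt{2}\,\Gamma^2(1/2+s)\cos^2\pi s)$, not with $\sin^2\pi s$. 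The $\cos^2$ version is the correct one: it is exactly what makes the last sentence of the theorem true, since the double pole at $s=1/2$ then comes from the double zero of $\cos^2\pi s$ there while $L^{\pm}_f(-1/2)$ is finite (this is the holomorphy at $s=-1/2$ established in the proof of Theorem \ref{thm:muller}), whereas a genuine $\sin^2\pi s$ in the denominator would force spurious double poles at the integers. So do not try to make your computation "produce exactly $\sin^2\pi s$" --- the printed statement contains a typo (harmless downstream, since Lemma \ref{nondiag} uses only the pole location and polynomial growth of the right-hand side). The remainder of your outline --- meromorphic continuation from Theorem \ref{thm:muller} and persistence of the double pole because $\kappa=1/2$ is neither a negative nor a positive odd integer --- is correct and needs no modification.
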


\section{Preliminary evaluation}

Let $ \HH $ be the Poincare upper half-plane and denote by $H_{2k}$ 
the normalized Hecke basis for the space of holomorphic cusp forms 
of even weight $2k \geq 2$ with respect to the full modular group. 
If the function $ f \in H_{2k} $ has the Fourier expansion 
\begin{equation}
f(z)=\sum_{n\geq 1}\lambda_f(n)n^{k-1/2}\exp(2\pi inz), 
\quad \lambda_f(1)=1, 
\end{equation}
the associated Hecke $ L $-function is defined by
\begin{equation}
L(f,s)=\sum_{n \geq 1}\frac{\lambda_f(n)}{n^s}, \quad \Re{s}>1.
\end{equation} 
Let $\Gamma(s)$ be the Gamma function. Then the completed 
$ L $-function 
\begin{equation}
\Lambda(f, s) 
= \left(\frac{1}{2 \pi}\right)^s \Gamma\left(s+\frac{2k-1}{2}\right) 
L(f, s)
\end{equation} 
satisfies the functional equation
\begin{equation} \label{eq: functionalE}
\Lambda(f, s)=\epsilon_f\Lambda(f, 1-s), \quad \epsilon_f=i^{2k}, 
\end{equation}
and can be analytically continued to the entire complex plane. Note 
that by equation \eqref{eq: functionalE} we have $ L_f(1/2) = 0 $ 
for odd $ k $.

For $\Re{s}>1$ the symmetric square $ L $-function is defined by
\begin{equation}
L(\sym^2f,s)=\zeta(2s)\sum_{n=1}^{\infty}\frac{\lambda_f(n^2)}{n^s}.
\end{equation}
Let 
\begin{equation}
L_{\infty}(s) := 
\pi^{-3s/2} \Gamma\left(\frac{s+1}{2}\right) 
\Gamma\left(\frac{s-1}{2}+k\right) \Gamma\left(\frac{s}{2}+k\right).
\end{equation}
The completed $L$-function
\begin{equation*}
\Lambda(\sym^2f,s):=L_{\infty}(s)L(\sym^2f,s)
\end{equation*}
is entire and satisfies the functional equation
\begin{equation}
\Lambda(\sym^2f,s)=\Lambda(\sym^2f,1-s).
\end{equation}

In this section we combine the techniques of analytic continuation 
and the approximate functional equation in order to express the 
average
\begin{equation}
\sum_{f\in H_{4k}}\omega(f)L(f,1/2)L(\sym^2f, 1/2)
\end{equation}
as a sum of three parts.

First, we use the exact formula for the twisted moment of symmetric 
square $L$-functions.
\begin{lem}\label{lem:EF}
We have
\begin{equation*}
\sum_{f \in H_{4k}} \omega(f) \lambda_f(l) L(\sym^2f, 1/2+u) 
= 
M^{D}(u,l) \delta_{l=\Box} + M^{ND}(u,l) + ET(u,l),
\end{equation*}
where
\begin{equation}
\delta_{l=\Box} = 
\begin{cases} 
1 & \text{if } l \text{ is a full square,} \\ 
0 & \text{otherwise}, 
\end{cases}
\end{equation}
\begin{multline} \label{eq:MT}
M^D(u,l^2) 
= 
\frac{\zeta(1+2u)}{l^{1/2+u}} + \sqrt{2}(2\pi)^{3u}\cos{\pi(1/4+u/2)} 
\times \\ 
\frac{\zeta(1-2u)}{l^{1/2-u}} \frac{\Gamma(2k-1/4-u/2) \Gamma(2k+1/4-u/2) 
\Gamma(1-2u)}{\Gamma(2k+1/4+u/2) \Gamma(2k-1/4+u/2)\Gamma(1-u)}, 
\end{multline}
\begin{equation}
M^{ND}(0,l) 
= 
\frac{\sqrt{2\pi}}{2l^{1/4}} \frac{\Gamma(2k-1/4)}{\Gamma(2k+1/4)} 
\mathscr{L}_{-4l}(1/2),
\end{equation}
\begin{multline}\label{eq:ET}
ET(0,l) =
\frac{1}{l^{1/4}} \sum_{1\leq n<2\sqrt{l}} \mathscr{L}_{n^2-4l}(1/2) 
\Phi_{2k}\left(\frac{n^2}{4l}\right) + \\
\frac{1}{\sqrt{2}l^{1/2}} \sum_{n>2\sqrt{l}} \mathscr{L}_{n^2-4l}(1/2) 
\sqrt{n} \Psi_{2k}\left(\frac{4l}{n^2} \right).
\end{multline}
\end{lem}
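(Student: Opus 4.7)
My plan is to derive this exact formula as a specialization of \cite[Theorem~2.1]{BF1} from weight $2k$ to weight $4k$; for completeness I sketch how such a twisted first-moment formula is produced from first principles.

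I would begin in the region of absolute convergence by writing
\[
\lambda_f(l)\, L(\sym^2 f, 1/2+u)
=
\zeta(1+2u) \sum_{n \geq 1} n^{-1/2-u} \lambda_f(l)\lambda_f(n^2),
\]
expand $\lambda_f(l)\lambda_f(n^2) = \sum_{d \mid (l,n^2)} \lambda_f(ln^2/d^2)$ via Hecke multiplicativity, and average over $f \in H_{4k}$ against the harmonic weight $\omega(f)$ using the Petersson trace formula. The resulting identity then needs to be continued meromorphically in $u$ down to a neighbourhood of $u = 0$. The diagonal ($\delta_{m=n}$) term from Petersson is non-vanishing only when $ln^2/d^2 = 1$ has a solution, which forces $l$ to be a perfect square; combining these contributions with the functional equation of $L(\sym^2 f, s)$ (used to balance the direct and dual parts of the Dirichlet series) should yield precisely the two summands of $M^D(u, l^2)$ in \eqref{eq:MT}.

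The off-diagonal contribution from Petersson has the form
\[
2\pi \sum_{c \geq 1} \frac{1}{c} \sum_{d \mid l} \sum_{n \geq 1} n^{-1/2-u}\,
S\!\left(\frac{ln^2}{d^2},1;c\right)
J_{4k-1}\!\left(\frac{4\pi\sqrt{ln^2/d^2}}{c}\right).
\]
To extract $M^{ND}$ from the true error $ET$, I would transform the inner sum $\sum_n S(ln^2, 1; c)$ by a Voronoi/Poisson-type identity: opening the Kloosterman sum and applying Poisson summation in $n$ converts the quadratic Gauss sum $\sum_{n \bmod c} e_c(a n^2)$ into counts of solutions to $t^2 \equiv 4l \pmod{4c}$ --- precisely the $q$-local factors in \eqref{dirserL} defining $\mathscr{L}_n(s)$. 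After summing over $c$, the dual index (still denoted $n$) should produce coefficients $\mathscr{L}_{n^2-4l}(1/2)$, with the $n = 0$ term supplying the non-diagonal main term $M^{ND}(0,l) = \frac{\sqrt{2\pi}}{2l^{1/4}} \frac{\Gamma(2k-1/4)}{\Gamma(2k+1/4)} \mathscr{L}_{-4l}(1/2)$; the Gamma ratio arises from the Mellin transform of $J_{4k-1}$ along a specific vertical line.

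For $n \neq 0$, the Bessel integral transform evaluates, through a Mellin--Barnes representation and the Gauss hypergeometric formula, to different special functions depending on the sign of $n^2 - 4l$: in the elliptic regime $n < 2\sqrt{l}$ one expects $\Phi_{2k}(n^2/(4l))$, and in the hyperbolic regime $n > 2\sqrt{l}$ a multiple of $\sqrt{n}\,\Psi_{2k}(4l/n^2)$, matching the two pieces of $ET(0,l)$ in \eqref{eq:ET}. The hard step will be this Voronoi-type dualization: one must control the meromorphic continuation in $u$, cleanly isolate the $n = 0$ dual contribution (which becomes $M^{ND}$) from the $n \neq 0$ contributions (which become $ET$), and identify the arising Gauss-sum coefficients with the special values $\mathscr{L}_{n^2-4l}(1/2)$ --- whose well-definedness at $s = 1/2$ is itself guaranteed by Theorem \ref{thm:lfuncteq}.
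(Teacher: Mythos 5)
The paper's entire proof of this lemma is the citation to \cite[Eq.~2.9, 5.6]{BF1}, which is precisely the first line of your proposal, so you are taking the same route; your supplementary sketch of how that exact formula is derived (Hecke multiplicativity, Petersson's formula, and a Gauss-sum/Poisson dualization of the Kloosterman-sum contribution producing the coefficients $\mathscr{L}_{n^2-4l}(1/2)$ and the kernels $\Phi_{2k}$, $\Psi_{2k}$) is consistent with the method of that reference but is not needed here. One small correction: the meaning of $\mathscr{L}_{n^2-4l}(s)$ at $s=1/2$ is supplied by the analytic continuation and functional equation $\mathscr{L}^{*}_{n}(s)=\mathscr{L}^{*}_{n}(1-s)$ quoted from \cite{B}, not by Theorem \ref{thm:lfuncteq}, which concerns the double Dirichlet series $L^{\pm}_{f}$, $L^{\pm}_{g}$ built from these central values.
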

\begin{proof}
See \cite[Eq.~2.9,~5.6]{BF1}.
\end{proof}
\begin{rem}
The role of the shift $u$ is to simplify the evaluation of the diagonal 
main term in Section \ref{sec:MT}.
\end{rem}
Second, we obtain an approximate functional equation for the Hecke 
$ L $-function at the central point.
Let 
\begin{equation*}
g(s,u) := 
\frac{\left(s^2-(-1/4-u/2)^2\right) 
\left( s^2-(-1/4+u/2)^2\right)}{(1/4+u/2)^2(1/4-u/2)^2}.
\end{equation*}

\begin{lem}\label{lem:AFE} We have
\begin{equation}
L(f,1/2)=2\sum_{l=1}^{\infty}\frac{\lambda_f(l)}{\sqrt{l}}V_k(l,u) 
\end{equation}
where 
\begin{equation}
V_k(l,u) = 
\frac{1}{2\pi i}\int_{(\sigma)}g(s,u)\frac{\Gamma(k+s)}{\Gamma(k)} 
\frac{ds}{(2\pi l)^s s}, \ \sigma > 0. 
\end{equation} 
\end{lem}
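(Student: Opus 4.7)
The plan is to run a standard approximate functional equation argument for $L(f,1/2)$ with test function $g(s,u)$. The first observation is that $g(s,u)$ is a polynomial in $s^2$, hence even in $s$, and a quick substitution shows $g(0,u)=1$ (the numerator at $s=0$ equals the denominator). The zeros of $g(s,u)$ at $s=\pm(1/4\pm u/2)$ play no role in this lemma itself, but are clearly engineered to kill poles arising in the subsequent averaging of $M^{D}(u,l^2)$ from Lemma \ref{lem:EF} as $u\to 0$.

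Next I would introduce the contour integral
\begin{equation*}
I \;:=\; \frac{1}{2\pi i}\int_{(\sigma)} g(s,u)\,\Lambda(f,\tfrac{1}{2}+s)\,\frac{ds}{s}, \qquad \sigma > 1/2,
\end{equation*}
and shift the line of integration from $\Re(s)=\sigma$ to $\Re(s)=-\sigma$. Since $g$ and $\Lambda(f,\cdot)$ are entire, the only pole crossed is the simple pole at $s=0$, whose residue is $g(0,u)\Lambda(f,1/2)=\Lambda(f,1/2)$. Justifying the shift reduces to the standard Stirling estimate: the exponential decay of $\Gamma(s+(4k-1)/2)$ on vertical lines dominates the polynomial growth of $g(s,u)$ together with any convexity bound for $L(f,1/2+s)$, so the horizontal segments at height $T$ contribute $o(1)$ as $T\to\infty$. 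In the shifted integral I substitute $s\mapsto -s$, exploit evenness of $g$, and invoke the functional equation $\Lambda(f,1/2+s)=\Lambda(f,1/2-s)$ (valid since $\epsilon_f=i^{4k}=1$ for $f\in H_{4k}$), which identifies the new integral with $I$ itself. This gives the symmetric identity $\Lambda(f,1/2)=2I$.

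Finally, on the line $\Re(s)=\sigma>1/2$ the series $L(f,1/2+s)=\sum_l \lambda_f(l) l^{-1/2-s}$ converges absolutely, so I can interchange summation and integration in $I$, unfold $\Lambda(f,1/2+s)=(2\pi)^{-1/2-s}\Gamma(s+(4k-1)/2)L(f,1/2+s)$, and divide through by the factor $\Lambda(f,1/2)/L(f,1/2)=(2\pi)^{-1/2}\Gamma(2k)$, so that the Gamma ratio collapses to the factor appearing in $V_k(l,u)$ and the $(2\pi)^{-s}l^{-s}$ terms combine into $(2\pi l)^{-s}$. The result is exactly the claimed identity. The only mildly delicate point in the argument is the uniform bound required for the contour shift; everything else is bookkeeping of Gamma factors and powers of $2\pi$.
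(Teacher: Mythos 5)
Your argument is correct and is essentially the paper's own proof: the authors likewise introduce $I=\frac{1}{2\pi i}\int_{(\sigma)}g(s,u)\Lambda(f,\tfrac12+s)\frac{ds}{s}$, shift the contour to $\Re(s)=-\sigma$, pick up the simple pole at $s=0$ with residue $g(0,u)\Lambda(f,1/2)=\Lambda(f,1/2)$, and use the evenness of $g$ together with the trivial root number to obtain $2I=\Lambda(f,1/2)$ before unfolding the Dirichlet series; you have merely filled in the convergence and bookkeeping details they omit. The one cosmetic caveat is that your computation for weight $4k$ produces the ratio $\Gamma(2k+s)/\Gamma(2k)$ rather than the $\Gamma(k+s)/\Gamma(k)$ displayed in $V_k$ (which is normalized for weight $2k$); this indexing slip is already present in the paper's own application of the lemma and does not affect the mathematics.
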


\begin{proof}
Consider
\begin{equation*}
I(s)=\int_{(\sigma)}g(s,u)\Lambda(f,1/2+s)\frac{ds}{s}, \quad \sigma>0.
\end{equation*}
Moving the contour of integration from $\sigma$ to $-\sigma$ we pick 
up a simple pole at $s=0$, obtaining
\begin{equation*}
2I(s)=\Lambda(f,1/2).
\end{equation*}
The assertion follows.
\end{proof}
As a consequence of Lemma~\ref{lem:EF} and Lemma~\ref{lem:AFE} we 
get the following decomposition: 
\begin{multline} \label{eq:product} 
\sum_{f \in H_{4k}} \omega(f) L(f,1/2) L(\sym^2f, 1/2+u) = 
2 \sum_{l=1}^{\infty} \frac{V_k(l,u)}{\sqrt{l}} M^{D}(u,l) \delta_{l=\Box} \\ 
+ 2 \sum_{l=1}^{\infty} \frac{V_k(l,u)}{\sqrt{l}} M^{ND}(u,l) 
+ 2 \sum_{l=1}^{\infty} \frac{V_k(l,u)}{\sqrt{l}}ET(u,l).
\end{multline}


\section{Diagonal main term}\label{sec:MT}

In this section we evaluate asymptotically the diagonal term in 
\eqref{eq:product}, namely
\begin{equation*}
M^{D}:=2\sum_{l=1}^{\infty}\frac{V_k(l,0)}{\sqrt{l}}M^{D}(0,l)\delta_{l=\Box}.
\end{equation*}

\begin{lem}
For any $\epsilon>0$ and any real number $0<a<k$ we have
\begin{multline}
2\sum_{l=1}^{\infty}\frac{V_k(l,u)}{\sqrt{l}}M^{D}(u,l)\delta_{l=\Box} = 
2\Biggl( \zeta(1+2u)\zeta(3/2+u)+\\
\sqrt{2}(2\pi)^{3u}\cos{\pi(1/4+u/2)}\zeta(1-2u)\zeta(3/2-u) \times \\ 
\frac{\Gamma(2k-1/4-u/2)}{\Gamma(2k+1/4+u/2)}
\frac{\Gamma(2k+1/4-u/2)\Gamma(1-2u)}{\Gamma(2k-1/4+u/2)\Gamma(1-u)}\Biggr) 
+ O(k^{-a+\epsilon}).
\end{multline}
\end{lem}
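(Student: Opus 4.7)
The plan is to insert the Mellin--Barnes representation of $V_k(l,u)$ from Lemma~\ref{lem:AFE} into the diagonal sum, interchange summation and integration, and then read off the two main terms by shifting the contour past the pole at $s=0$, bounding what remains via Stirling's formula.

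First I would restrict $l$ to perfect squares, write $l=n^2$, and split according to the two summands of $M^D(u,n^2)$ in \eqref{eq:MT}. Denoting by $R(u,k)$ the cosine/gamma ratio appearing in the second summand, the diagonal contribution becomes
\[
2\zeta(1+2u)\,S_{+}(u,k) \;+\; 2R(u,k)\,\zeta(1-2u)\,S_{-}(u,k),
\]
where $S_{\pm}(u,k) := \sum_{n\geq 1} V_k(n^2,u)\, n^{-3/2\mp u}$. Inserting the definition of $V_k$ and using $\sum_n n^{-2s-3/2\mp u}=\zeta(2s+3/2\pm u)$, valid by absolute convergence for $\sigma$ sufficiently large, yields
\[
S_{\pm}(u,k) = \frac{1}{2\pi i}\int_{(\sigma)} g(s,u)\,\frac{\Gamma(k+s)}{\Gamma(k)}\,\zeta(2s+3/2\pm u)\,\frac{ds}{(2\pi)^s\,s}.
\]

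Second, I would shift the contour from $\Re s=\sigma$ to $\Re s=-a$ (with $0<a<k$ and $a$ avoiding the real values $1/4\pm\Re u/2$). Inside this strip the only pole of the integrand is the simple pole of $1/s$ at $s=0$: the prospective pole of $\zeta(2s+3/2\pm u)$ at $s=-1/4\mp u/2$ is cancelled by the factor $s^2-(-1/4\mp u/2)^2$ present in $g(s,u)$, while the poles of $\Gamma(k+s)$ at $s=-k,-k-1,\ldots$ all lie outside the strip. Since $g(0,u)=1$, the residue at $s=0$ equals $\zeta(3/2\pm u)$, and after multiplication by the respective prefactors these two residues produce precisely the two main terms asserted in the lemma.

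The error comes from the remaining integral over $\Re s=-a$. By Stirling $\Gamma(k+s)/\Gamma(k)\ll k^{-a}$ uniformly for $|\Im s|\ll k$ with super-polynomial decay for $|\Im s|\gg k$, while $g(s,u)/s$ and $\zeta(2s+3/2\pm u)$ grow only polynomially in $|\Im s|$; the vertical integral therefore contributes at most $k^{\epsilon}$, yielding the bound $O(k^{-a+\epsilon})$. The main technical point to check carefully is the cancellation between the zeros of $g$ and the pole of $\zeta$, which prevents a spurious secondary term of size $k^{-1/2}$ from leaking into the residue calculation; once that cancellation and the uniformity of the Stirling estimate on the full vertical line are confirmed, summing the two contributions produces the stated asymptotic.
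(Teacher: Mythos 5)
Your proposal is correct and follows essentially the same route as the paper: insert the Mellin--Barnes integral for $V_k(l,u)$, interchange summation and integration to produce $\zeta(2s+3/2\pm u)$, observe that the zeros of $g(s,u)$ cancel the poles of these zeta factors, shift the contour to $\Re s=-a$ picking up only the simple pole at $s=0$ (where $g(0,u)=1$), and bound the shifted integral by $k^{-a+\epsilon}$ via Stirling. The only cosmetic difference is that you split the two summands of $M^D$ into separate integrals $S_\pm$ before shifting, while the paper keeps them in a single integral.
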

\begin{proof}
Consider
\begin{equation*}
2\sum_{l=1}^{\infty}\frac{V_k(l,u)}{\sqrt{l}}M^D(u,l)\delta_{l=\Box} = 
\frac{2}{2\pi i}\int_{(\sigma)}\frac{\Gamma(k+s)}{\Gamma(k)} 
\frac{g(s,u)}{(2\pi)^s}
\sum_{l=1}^{\infty}\frac{M^{D}(u,l^2)}{l^{2s+1}}\frac{ds}{s}.
\end{equation*}
According to \eqref{eq:MT} this is equal to
\begin{multline*} 
\frac{2}{2\pi i}\int_{(\sigma)}\frac{\Gamma(k+s)}{\Gamma(k)}g(s,u)
\Biggl[\frac{\zeta(1+2u)\zeta(3/2+u+2s)}{2} + \\  
\sqrt{2}(2\pi)^{3u}\cos{\pi(1/4+u/2)}\zeta(1-2u)\zeta(3/2-u+2s)\times \\ 
\frac{\Gamma(2k-1/4-u/2)\Gamma(2k+1/4-u/2)\Gamma(1-2u)}{\Gamma(2k+1/4+u/2) 
\Gamma(2k-1/4+u/2)\Gamma(1-u)}\Biggr]\frac{ds}{(2\pi)^ss}.
\end{multline*}
The integrand has poles at
\begin{equation*}
s=0, \quad s=-k-j, \quad j=0,1,\ldots
\end{equation*}
Note that the poles at $s=-1/4\pm u/2$ are compensated by the zeros of 
$g(s,u)$. Crossing the pole at $s=0$, we can move the line of integration 
to any real number $-a$ such that $0<a<k$. 
Consequently, the resulting integral is bounded by $O(k^{-a+\epsilon})$. 
The assertion follows by calculating the residue at $s=0$.
\end{proof}

Computing the limit as $u \rightarrow 0$, we evaluate the main term at 
the central point.
\begin{cor}\label{cor:MT1}
For any $\epsilon>0$ and any real number $0<a<k$ we have
\begin{equation} 
\begin{split} 
M^{D} = 
\zeta(3/2) & \Biggl(-3 \log{2\pi} + \frac{\pi}{2} + 3\gamma + 
2 \frac{\zeta'(3/2)}{\zeta(3/2)} \\ 
& \quad 
+ \psi(2k-1/4)+\psi(2k+1/4)\Biggr)
+ O(k^{-a+\epsilon}), 
\end{split} 
\end{equation}
where $\psi(x)$ is the logarithmic derivative of the Gamma function. 
\end{cor}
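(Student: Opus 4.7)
The plan is simply to take the limit $u\to 0$ in the formula of the preceding lemma. Both $\zeta(1+2u)$ and $\zeta(1-2u)$ have simple poles at $u=0$ with residues of opposite sign, so each of the two main terms diverges as $u\to 0$; the polar parts must cancel exactly, and the finite remainder is what produces the stated expression. The error term $O(k^{-a+\epsilon})$ is uniform in $u$ in a small neighborhood of $0$ (the contour argument in the lemma's proof is independent of $u$), so it passes through the limit unchanged.

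Concretely, I would write the second main term as $T_2(u):=A(u)\zeta(1-2u)$, where
\[
A(u):=2\sqrt{2}(2\pi)^{3u}\cos\pi\!\left(\tfrac{1}{4}+\tfrac{u}{2}\right)\zeta\!\left(\tfrac{3}{2}-u\right)G(u,k),
\]
with $G(u,k)$ denoting the ratio of six Gamma functions appearing in \eqref{eq:MT}. At $u=0$ one checks $\cos(\pi/4)=1/\sqrt{2}$ and $G(0,k)=1$ (the four $2k$-shifted factors pair up), so $A(0)=2\zeta(3/2)$. Using the Laurent data $\zeta(1\pm 2u)=\pm\tfrac{1}{2u}+\gamma+O(u)$ and $\zeta(3/2\pm u)=\zeta(3/2)\pm u\zeta'(3/2)+O(u^2)$, the $1/u$ contributions from $T_1(u):=2\zeta(1+2u)\zeta(3/2+u)$ and from $T_2(u)$ equal $+\zeta(3/2)/u$ and $-\zeta(3/2)/u$ and therefore cancel.

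The remaining finite part is assembled from the constant terms of $T_1$ and $T_2$, and the only nontrivial ingredient is the logarithmic derivative $A'(0)/A(0)$. The contribution of $G$ is the only place where $k$-dependence survives: the six digamma terms do not cancel because the signs of $u/2$ are arranged asymmetrically between numerator and denominator, and the sum reduces to $-\psi(2k-1/4)-\psi(2k+1/4)-\psi(1)$. Adding the remaining pieces $3\log(2\pi)$, $-\pi/2$ (from $\tan(\pi/4)=1$), and $-\zeta'(3/2)/\zeta(3/2)$ gives
\[
\frac{A'(0)}{A(0)}=3\log(2\pi)-\frac{\pi}{2}-\frac{\zeta'(3/2)}{\zeta(3/2)}-\psi(2k-1/4)-\psi(2k+1/4)+\gamma.
\]
Collecting the constant terms of $T_1(u)+T_2(u)$ reproduces exactly the right-hand side of the corollary; the factor $2\zeta'(3/2)/\zeta(3/2)$ appears as the sum of two symmetric contributions, one coming from $\zeta(3/2+u)$ in $T_1$ and one from $\zeta(3/2-u)$ inside $A(u)$.

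The only real obstacle is careful sign bookkeeping in the logarithmic derivative of the Gamma-factor ratio $G(u,k)$, and verifying that the polar parts of the two terms match. No analytic input beyond the standard Laurent expansion of $\zeta$ at $s=1$ is needed.
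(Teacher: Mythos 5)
Your proposal is correct and is exactly the computation the paper intends: the paper's entire proof of this corollary is the one-line remark that one computes the limit as $u\to 0$ of the preceding lemma, and your Laurent-expansion bookkeeping (cancellation of the $\pm\zeta(3/2)/u$ polar parts, $A(0)=2\zeta(3/2)$, and the logarithmic derivative $A'(0)/A(0)$ producing the $-\psi(2k-1/4)-\psi(2k+1/4)+\gamma$ and $-\zeta'(3/2)/\zeta(3/2)$ contributions) reproduces the stated constants precisely. The uniformity of the $O(k^{-a+\epsilon})$ error in $u$ near $0$, which you note, is indeed the only point needing a word of justification, and it follows from the contour argument in the lemma being independent of $u$ in a neighborhood of the origin.
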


Finally, the diagonal main term can be averaged over the weight $k$ with 
a suitable test function.
\begin{cor}\label{cor:diagonal} 
For any fixed $A>0$ we have
\begin{multline}
\sum_{k}h\left( \frac{4k}{K}\right)M^{D}=\frac{HK}{4}\zeta(3/2)\times \\
\Biggl(2\log{K}-3\log{2\pi}-2\log{2}+\frac{\pi}{2}+3\gamma 
+ 2\frac{\zeta'(3/2)}{\zeta(3/2)}+2\frac{H_1}{H} \Biggr) \\ 
+P_h(1/K)+O(K^{-A}),
\end{multline}
where
\begin{equation}
H=\int_{0}^{\infty}h(y)dy, \quad H_1=\int_{0}^{\infty}h(y)\log{y}dy
\end{equation}
and $P_h(x)$ is a polynomial in $x$ of degree $A-1$ and with coefficients 
depending on the function $h$.
\end{cor}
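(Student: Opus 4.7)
The plan is to substitute Corollary~\ref{cor:MT1} into the $k$-sum and then convert the sum over $k$ to an integral via Poisson (or Euler--Maclaurin) summation. The error term $O(k^{-a+\epsilon})$ in Corollary~\ref{cor:MT1}, summed over the $O(K)$ values of $k$ in the support of $h(4k/K)$, contributes $O(K^{1-a+\epsilon})$; choosing $a = A+1+\epsilon$ absorbs this into $O(K^{-A})$. It therefore suffices to evaluate
\begin{equation*}
\zeta(3/2)\sum_{k\geq 1} h(4k/K)\bigl(C_0 + \psi(2k-1/4) + \psi(2k+1/4)\bigr),
\end{equation*}
where $C_0 := -3\log 2\pi + \pi/2 + 3\gamma + 2\zeta'(3/2)/\zeta(3/2)$.

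Since $h \in C_0^{\infty}(\R^+)$ is supported in $[\theta_1,\theta_2]$ with $\theta_1>0$, for $K$ large the function $\phi(t) := h(4t/K)$ extends by zero to a smooth compactly supported function on $\R$ whose support lies in a region where $G(t) := C_0 + \psi(2t-1/4) + \psi(2t+1/4)$ is smooth. Poisson summation then gives
\begin{equation*}
\sum_{k\in\Z} \phi(k) G(k) = \int_{-\infty}^{\infty} \phi(t) G(t)\, dt + \sum_{m\neq 0}\widehat{\phi G}(m) = \frac{K}{4}\int_0^\infty h(y) G(Ky/4)\, dy + O(K^{-A}),
\end{equation*}
the non-zero Fourier modes decaying by any power of $K$ via repeated integration by parts using the bounds $\|h^{(n)}\|_1 \ll 1$ together with the fact that on the support one has $t \asymp K$, so each derivative of $G$ saves a factor of $K^{-1}$.

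The next step is to expand $\psi(Ky/2 \pm 1/4)$ using the classical digamma asymptotic
\begin{equation*}
\psi(x) = \log x - \frac{1}{2x} - \sum_{n=1}^{N} \frac{B_{2n}}{2n\, x^{2n}} + O(x^{-2N-2}),
\end{equation*}
applied at $x = Ky/2 \pm 1/4$ and Taylor-expanded around the shift $\pm 1/4$. Summing the two branches yields
\begin{equation*}
\psi(Ky/2+1/4) + \psi(Ky/2-1/4) = 2\log(Ky/2) + \sum_{j=1}^{A-1} \frac{a_j(y)}{K^j} + O(K^{-A}),
\end{equation*}
where each $a_j(y)$ is a finite sum of terms $c_{j,m}/y^m$ with constants involving Bernoulli numbers, uniformly bounded on $[\theta_1,\theta_2]$ since $\theta_1>0$. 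The leading piece $2\log(Ky/2) = 2\log K - 2\log 2 + 2\log y$, integrated against $h(y)$ and multiplied by $(K/4)\zeta(3/2)$, produces $\frac{HK}{4}\zeta(3/2)(2\log K - 2\log 2 + 2H_1/H)$; the constant $C_0$ contributes $\frac{HK}{4}\zeta(3/2)\, C_0$, and together these reconstitute the explicit main term of the corollary.

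The subleading terms in the digamma expansion, after multiplication by $(K/4)\zeta(3/2)$, become pieces of size $K^{1-j}$ for $j=1,\ldots,A-1$, which collectively form the polynomial $P_h(1/K)$ of degree $A-1$ in $1/K$ with coefficients $\frac{\zeta(3/2)}{4}\int h(y)\, a_j(y)\, dy$ depending on $h$ through inverse moments of the form $\int h(y) y^{-m}\, dy$. The main obstacle is purely organizational: one must carry the Taylor expansion of $\psi$ to sufficient order so that the cumulative truncation is genuinely within $O(K^{-A})$, and verify that the combinatorics of the two shifts $\pm 1/4$ package themselves neatly into a polynomial in $1/K$. Since $1/y$ and all its powers are bounded on the support of $h$, this is routine bookkeeping with no hidden analytic difficulty.
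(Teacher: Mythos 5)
Your proposal is correct and follows essentially the same route as the paper: the paper likewise starts from Corollary \ref{cor:MT1}, expands $\psi(2k-1/4)+\psi(2k+1/4)=2\log k+2\log 2+P(1/k)+O(k^{-A-1})$, and then invokes the summation formulas $\sum_k h(4k/K)=\tfrac{HK}{4}+O(K^{-b})$ and $\sum_k h(4k/K)\log k=\tfrac{HK}{4}(\log K-\log 4)+\tfrac{H_1K}{4}+O(K^{-b})$ from \cite[Section~7]{BF}, which are themselves established by exactly the Poisson-summation argument you carry out explicitly. The only point to watch is the one you already flag: the digamma expansion must be taken to order $K^{-A-1}$ (not $K^{-A}$) so that the truncation error survives multiplication by the factor $K$ coming from the length of the $k$-sum.
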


\begin{proof}
This is derived from Corollary \ref{cor:MT1} by using the asymptotic formula
\begin{equation*}
\psi(2k-1/4) + \psi(2k+1/4) = 
2\log{k} + 2\log{2} + P(1/k) + O\left(\frac{1}{k^{A+1}}\right),
\end{equation*}
where $P(x)$ is a polynomial of degree $A$ such that $P(0)=0$,
and results of \cite[Section~7]{BF}, namely
\begin{equation*}
\sum_{k} h\left(\frac{4k}{K}\right) = 
\frac{HK}{4}+O\left( \frac{1}{K^{b}}\right) 
\end{equation*}
and 
\begin{equation*}
\sum_{k} h\left(\frac{4k}{K}\right)\log{k} = 
\frac{HK}{4}(\log{K}-\log{4})+\frac{H_1K}{4}+O\left( \frac{1}{K^{b}}\right)
\end{equation*}
for any $b>0$.
\end{proof}


\section{Non-diagonal main term}\label{sec:MT2}

Consider the non-diagonal term
\begin{equation*} 
\begin{split} 
M^{ND} &:= 2 \sum_{l=1}^{\infty} \frac{V_{k}(l,0)}{\sqrt{l}}M^{ND}(0,l) \\ 
&= 
\sqrt{2\pi} \frac{\Gamma(2k-1/4)}{\Gamma(2k+1/4)} 
\sum_{l=1}^{\infty}\frac{V_{k}(l,0)}{l^{3/4}}\mathscr{L}_{-4l}(1/2).
\end{split} 
\end{equation*}

\begin{lem}\label{nondiag}
For any $A>0$ we have
\begin{equation}
M^{ND}= 8\sqrt{2\pi} L^{-}_{g}(1/4) \frac{\Gamma(2k-1/4)}{\Gamma(2k+1/4)}+O(k^{-A}).
\end{equation}
\end{lem}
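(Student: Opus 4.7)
The plan is to substitute the Mellin--Barnes representation of $V_k(l,0)$ from Lemma~\ref{lem:AFE} into the definition of $M^{ND}$ and interchange summation and integration, which by \eqref{eq:lg} gives
\begin{equation*}
M^{ND}=\sqrt{2\pi}\,\frac{\Gamma(2k-1/4)}{\Gamma(2k+1/4)}\cdot\frac{8}{2\pi i}\int_{(\sigma)}g(s,0)\frac{\Gamma(k+s)}{\Gamma(k)(2\pi)^{s}s}L^{-}_{g}(s+1/4)\,ds,
\end{equation*}
after recognising the inner Dirichlet series $\sum_{l\geq 1}\mathscr{L}_{-4l}(1/2)/l^{s+3/4}$ as $8L^{-}_{g}(s+1/4)$. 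The exchange is legitimate as long as $\sigma$ is taken large enough to fall in the region of absolute convergence of $L^{-}_{g}(s+1/4)$, which is ensured by the subconvexity bound \eqref{eq:subcL}.

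I would then shift the line of integration from this initial $\sigma$ to the line $\Re(s)=-A'$ with $A'$ chosen substantially larger than $A$. By Theorem~\ref{thm:lfuncteq}, the only singularity of $L^{-}_{g}$ is a double pole at $s=1/2$, so in the strip crossed the integrand has at most two potential poles, at $s=1/4$ (from $L^{-}_{g}(s+1/4)$) and at $s=0$ (from the factor $1/s$). The crucial observation is that
\begin{equation*}
g(s,0)=256(s-1/4)^{2}(s+1/4)^{2},
\end{equation*}
so the double zero of $g(s,0)$ at $s=1/4$ cancels the double pole of $L^{-}_{g}(s+1/4)$ there and the integrand extends holomorphically across $s=1/4$. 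Only the simple pole at $s=0$ contributes, and since $g(0,0)=1$ its residue equals $8L^{-}_{g}(1/4)$, which produces exactly the main term $8\sqrt{2\pi}\,L^{-}_{g}(1/4)\,\Gamma(2k-1/4)/\Gamma(2k+1/4)$ after reinstating the prefactor.

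The remaining task, which is the only real obstacle, is to bound the shifted integral on $\Re(s)=-A'$ by $O(k^{-A'})$; combined with the prefactor $\Gamma(2k-1/4)/\Gamma(2k+1/4)\ll k^{-1/2}$, this will give an error $O(k^{-A'-1/2})$, and taking $A'$ sufficiently large finishes the proof. On this line $L^{-}_{g}(s+1/4)$ is to the left of its abscissa of absolute convergence, so I would apply the functional equation \eqref{eq:lgs} to re-express it in terms of $L^{\pm}_{f}(-s-1/4)$, which are absolutely convergent and uniformly bounded on vertical strips once $A'$ is large enough. Stirling's formula provides the key $k$-saving via $\Gamma(k+s)/\Gamma(k)\ll k^{-A'}$ with exponential decay in $|\Im(s)|$, while the extra Gamma, sine and $\pi^{2s+2}$ factors coming from \eqref{eq:lgs} grow only polynomially in $|\Im(s)|$ and are absorbed by this decay, so the integral satisfies the stated bound. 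This $t$-aspect control is routine but is the technical heart of the argument.
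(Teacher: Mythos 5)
Your proposal is correct and follows essentially the same route as the paper: express $M^{ND}$ via the Mellin--Barnes integral for $V_k(l,0)$, identify the inner Dirichlet series with $8L^{-}_{g}(s+1/4)$, shift the contour past the simple pole at $s=0$ (the double pole at $s=1/4$ being killed by the double zero of $g(s,0)$ there), and bound the shifted integral using the functional equation \eqref{eq:lgs} together with the decay of $\Gamma(k+s)/\Gamma(k)$. The computations $g(s,0)=256(s-1/4)^2(s+1/4)^2$, $g(0,0)=1$, and the residue $8L^{-}_{g}(1/4)$ all check out against the paper's argument.
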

\begin{proof}
Consider
\begin{equation*}
\sum_{l=1}^{\infty}\frac{V_k(l,0)}{l^{3/4}}\mathscr{L}_{-4l}(1/2) 
= \frac{1}{2\pi i}\int_{(\sigma_1)}\frac{\Gamma(k+s)}{\Gamma(k)} 
\sum_{l=1}^{\infty}\frac{\mathscr{L}_{-4l}(1/2)}{l^{s+3/4}} 
\frac{g(s,0)ds}{s(2\pi)^s}.
\end{equation*}
We assume that $\sigma_1>1$ to justify the change of order of summation 
and integration.
Equation \eqref{eq:lg} implies that
\begin{equation*}
\frac{1}{8}\sum_{l=1}^{\infty}\frac{\mathscr{L}_{-4l}(1/2)}{l^{s+3/4}}
=L^{-}_{g}(s+1/4).
\end{equation*}
According to Theorem \ref{thm:lfuncteq} the function $L^{-}_{g}(s+1/4)$
is holomorphic in $\C$ except for a double pole at $s=1/4$, which is 
compensated by the zeros of $g(s,0)$. Therefore, moving the contour
of integration from $\sigma_1$ to $\sigma_2=-A$ for any $A>0$, we cross
only a simple pole at $s=0$.  Consequently,
\begin{equation*}
\sum_{l=1}^{\infty}\frac{V_k(l,0)}{l^{3/4}}\mathscr{L}_{-4l}(1/2) 
= \res_{s=0}F(s)+\frac{1}{2\pi i}\int_{-A}F(s)ds,
\end{equation*}
where
\begin{equation*}
F(s):=8g(s,0)\frac{\Gamma(k+s)}{\Gamma(k)}\frac{L^{-}_{g}(s+1/4)}{(2\pi)^ss}.
\end{equation*}
The functional equation (see \eqref{eq:lgs})
\begin{equation*}
\begin{split} 
L^{-}_{g}(s+1/4) &= 
\frac{\pi^{2s+5/2}}{\sqrt{2} \Gamma^2(s+3/4) \sin^2\pi(s+1/4)} \times \\ 
& \quad 
\left(\frac{1}{\Gamma^2(1/4)} L^{+}_{f}(-s-1/4) 
+ \frac{\sin\pi s}{\pi}L^{-}_{f}(-s-1/4)\right)
\end{split} 
\end{equation*}
and the estimate
\begin{equation*}
\frac{\Gamma(k+s)}{\Gamma(k)}\ll \frac{\exp(-\pi|s|/2)}{k^A}
\end{equation*}
imply that
\begin{equation*}
\int_{-A} \frac{\Gamma(k+s)}{\Gamma(k)} L^{-}_{g}(s+1/4) \frac{g(s,0)ds}{(2\pi)^ss} 
\ll k^{-A}.
\end{equation*}
 The residue at the origin is equal to
\begin{equation*}
\res_{s=0}F(s)=8L^{-}_{g}(1/4).
\end{equation*}
Therefore, for any $A>0$ we have
\begin{equation*}
\sum_{l=1}^{\infty}\frac{V_k(l,0)}{l^{3/4}}\mathscr{L}_{-4l}(1/2) 
= 8L^{-}_{g}(1/4)+O(k^{-A}).
\end{equation*}
The assertion follows.

\end{proof}
\begin{cor}\label{cor:nondiagonal} For any fixed $A>0$ we have
\begin{equation}
\sum_{k} h\left(\frac{4k}{K}\right) M^{ND} 
= L^{-}_{g}(1/4) \sqrt{K}Q_{h}(1/K)+O(K^{-A}),
\end{equation}
where $Q_h(x)$ is a polynomial in $x$ of degree $A$ with coefficients depending 
on the function $h$.
\end{cor}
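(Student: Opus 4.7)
The starting point is Lemma \ref{nondiag}, which gives
\begin{equation*}
M^{ND} = 8\sqrt{2\pi}\, L^{-}_{g}(1/4) \frac{\Gamma(2k-1/4)}{\Gamma(2k+1/4)} + O(k^{-A'}),
\end{equation*}
for any fixed $A' > 0$. The plan is to insert this into $\sum_k h(4k/K) M^{ND}$ and analyze the two contributions separately. The error term, summed over the $O(K)$ values of $k$ for which $h(4k/K)$ is nonzero, contributes $O(K^{1-A'})$, which is absorbed into $O(K^{-A})$ by taking $A' = A+1$. So the entire task reduces to averaging the gamma quotient against the test function.

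Next, I would apply Stirling's formula to obtain a finite asymptotic expansion
\begin{equation*}
\frac{\Gamma(2k-1/4)}{\Gamma(2k+1/4)} = \frac{1}{\sqrt{2k}}\left(1 + \sum_{j=1}^{A} \frac{c_j}{(2k)^j}\right) + O\!\left(\frac{1}{k^{A+3/2}}\right),
\end{equation*}
for explicit absolute constants $c_j$. Writing $2k = (K/2)(4k/K)$, each term $(2k)^{-j-1/2}$ becomes $(K/2)^{-j-1/2} (4k/K)^{-j-1/2}$, so
\begin{equation*}
\sum_k h\!\left(\tfrac{4k}{K}\right)(2k)^{-j-1/2}
= \left(\tfrac{K}{2}\right)^{-j-1/2}\sum_k h_j\!\left(\tfrac{4k}{K}\right),
\quad h_j(y) := h(y) y^{-j-1/2}.
\end{equation*}
Since $h$ is smooth and compactly supported in $[\theta_1,\theta_2]$ with $\theta_1 > 0$, each $h_j$ is also smooth and compactly supported on the same interval.

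Now I invoke the averaging formula from \cite[Section~7]{BF} already used in the proof of Corollary \ref{cor:diagonal}: for any smooth compactly supported function $\phi$ and any $B > 0$,
\begin{equation*}
\sum_{k} \phi\!\left(\tfrac{4k}{K}\right) = \frac{K}{4}\int_0^\infty \phi(y)\,dy + O(K^{-B}).
\end{equation*}
Applying this with $\phi = h_j$ for $j = 0, 1, \ldots, A$, and choosing $B$ sufficiently large, yields
\begin{equation*}
\sum_k h\!\left(\tfrac{4k}{K}\right)(2k)^{-j-1/2}
= \frac{K}{4}\left(\tfrac{K}{2}\right)^{-j-1/2} H_j^* + O(K^{-B}),
\quad H_j^* := \int_0^\infty h(y) y^{-j-1/2}\,dy.
\end{equation*}
The leading factor equals $\sqrt{K}\cdot 2^{j-1/2}/(4 K^j)$, which is of the shape $\sqrt{K} \cdot (\text{const}) \cdot K^{-j}$. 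Summing these contributions for $j = 0, 1, \ldots, A$ thus produces exactly $\sqrt{K}\,Q_h(1/K)$, where $Q_h$ is a polynomial of degree $A$ with coefficients depending only on $h$ (through the integrals $H_j^*$) and on the universal Stirling constants $c_j$.

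The only real obstacle is bookkeeping: one must check that the remainder $O(k^{-A-3/2})$ from Stirling, after multiplying by $h(4k/K)$ and summing, is of size $O(K^{-A-1/2})$, and that the $O(K^{-B})$ terms from the Euler--Maclaurin-type averaging can be driven below $K^{-A}$ by choosing $B$ large enough. Both are routine. Multiplying through by the constant $8\sqrt{2\pi}\,L^{-}_{g}(1/4)$ and absorbing it into $Q_h$ gives the stated asymptotic.
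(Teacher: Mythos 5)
Your proposal is correct and follows essentially the same route as the paper: the paper's proof simply combines Lemma \ref{nondiag} with the asymptotic expansion of the ratio $\Gamma(2k-1/4)/\Gamma(2k+1/4)$ (citing \cite[Eq.~5.11.13]{HMF}, which is exactly the expansion you derive from Stirling) and the averaging formulas from \cite[Section~7]{BF}. Your write-up just makes the bookkeeping explicit.
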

\begin{proof}
The statement follows from Lemma \ref{nondiag} and \cite[Eq~5.11.13]{HMF}.
\end{proof}


\section{Error terms}

Finally, we estimate the last term appearing in (\ref{eq:product}) 
which we split into two terms 
\begin{equation*}
ET:=2\sum_{l=1}^{\infty}\frac{V_k(l,0)}{\sqrt{l}}ET(0,l)=ET_1+ET_2. 
\end{equation*}
By equation \eqref{eq:ET} we have
\begin{equation}\label{eq:ET3}
ET_1 = 
\sqrt{2} \sum_{l=1}^{\infty} \frac{V_k(l,0)}{l} \sum_{n>2\sqrt{l}} 
\mathscr{L}_{n^2-4l}(1/2)\sqrt{n} \Psi_{2k}\left(\frac{4l}{n^2}\right),
\end{equation}
\begin{equation}\label{eq:ET2}
ET_2 = 
2 \sum_{l=1}^{\infty} \frac{V_k(l,0)}{l^{3/4}} \sum_{n<2\sqrt{l}} 
\mathscr{L}_{n^2-4l}(1/2) \Phi_{2k}\left(\frac{n^2}{4l}\right).
\end{equation}

\noindent 
Note that if $l>k^{1+\epsilon}$, then
\begin{equation}\label{eq:Vk}
V_k(l,0)\ll \frac{1}{k^{A}} \text{ for any } A>0.
\end{equation}

Using inequality \eqref{eq:Vk}  we can assume that  $l<k^{1+\epsilon}$ 
in \eqref{eq:ET3} and \eqref{eq:ET2} at the cost of a negligible error 
term.


\subsection{Error term of the first type}

\begin{lem}\label{lem:firsttype} 
For any $ A > 0 $ we have 
\begin{equation}
\sum_{k}h\left( \frac{4k}{K}\right)ET_1\ll K^{-A}.
\end{equation}
\end{lem}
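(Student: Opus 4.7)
The plan is to obtain super-polynomial decay pointwise in each $k$ in the support of $h(\cdot/K)$; after that, summation over $k$ is trivial. The key ingredient is an exponential-decay bound on $\Psi_{2k}(x)$ that is effective whenever $x$ is bounded away from $1$ --- and this is automatic in \eqref{eq:ET3}, since $x = 4l/n^2$ with integer $n > 2\sqrt{l}$ forces $n^2 - 4l \geq 1$.

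First, I would invoke the rapid decay \eqref{eq:Vk} to restrict the outer sum in \eqref{eq:ET3} to $l \leq K^{1+\epsilon}$ at the cost of a negligible error $O(K^{-A})$. Next, I would establish a Laplace-type pointwise estimate for $\Psi_{2k}(x)$, $x \in (0,1)$, using Euler's integral representation
\begin{equation*}
\Psi_{2k}(x) = x^{2k} \int_0^1 t^{2k-3/4}(1-t)^{2k-5/4}(1-xt)^{-2k+1/4}\, dt.
\end{equation*}
Setting $y = \sqrt{1-x}$, the exponent $2k \log[t(1-t)/(1-xt)]$ attains its unique interior maximum $-4k\log(1+y)$ at $t_0 = 1/(1+y)$, with second derivative $-4k(1+y)^2/y$. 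A routine Laplace analysis, together with the external factor $x^{2k} = (1-y^2)^{2k}$, yields
\begin{equation*}
|\Psi_{2k}(x)| \ll \frac{1}{\sqrt{ky}}\left(\frac{1-y}{1+y}\right)^{2k} \ll \frac{1}{\sqrt{ky}}\exp(-4ky).
\end{equation*}

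Applying this with $x = 4l/n^2$, one has $y = \sqrt{n^2-4l}/n \geq 1/n$. In the relevant range $1 \leq l \leq K^{1+\epsilon}$ and $n \leq 4\sqrt{l}$ (for $n > 4\sqrt{l}$, the factor $(4l/n^2)^{2k}$ alone already produces super-exponential decay and rapid convergence of the $n$-sum), this forces $y \geq c K^{-(1+\epsilon)/2}$ and hence $ky \geq cK^{(1-\epsilon)/2}$ for $k \asymp K$. Consequently $|\Psi_{2k}(4l/n^2)| \ll \exp(-cK^{(1-\epsilon)/2})$. Combined with $|V_k(l,0)| \ll 1$, a subconvexity (or even convexity) estimate for $\mathscr{L}_{n^2-4l}(1/2)$, and crude bounds on the $l$- and $n$-sums, this already gives $ET_1 \ll K^{-A}$ pointwise in $k$; summation over $k$ contributes at most a factor of $K$. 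The main technical step is the uniform Laplace analysis of $\Psi_{2k}$ near $x=1$: the Gaussian width $\sqrt{y/k}$ must be much smaller than $y$, i.e.\ $ky \gg 1$, which holds throughout our range since $ky \geq k/(2\sqrt{l}+1) \gg K^{(1-\epsilon)/2}$.
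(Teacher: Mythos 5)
Your argument is correct and follows essentially the same strategy as the paper: in both cases the point is that $n>2\sqrt{l}$ forces $n^{2}-4l\geq 1$, so the argument of $\Psi_{2k}$ stays away from $1$ by at least $1/n^{2}\gg 1/l$, giving decay of the shape $\exp(-ck/\sqrt{l})$, which is super-polynomially small once \eqref{eq:Vk} truncates the $l$-sum at $k^{1+\epsilon}$. The only real difference is that the paper imports this decay as a black box (citing \cite[Lemma~7.3]{BF1}, which bounds the whole inner $n$-sum by $l^{1/2-1/24}k^{-1/2}\exp(-ck/l^{1/4})$ --- the discrepancy between $l^{1/4}$ and your $l^{1/2}$ in the exponent is immaterial here), whereas you rederive the pointwise bound on $\Psi_{2k}$ via Laplace's method on the Euler integral; your saddle-point data ($t_{0}=1/(1+y)$, value $-4k\log(1+y)$ combining with $x^{2k}$ to give $((1-y)/(1+y))^{2k}$, second derivative $-4k(1+y)^{2}/y$) all check out, and the validity condition $ky\gg 1$ is correctly verified in the relevant range.
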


\begin{proof}
It follows from \cite[Lemma~7.3]{BF1} that
\begin{equation*}
\sum_{k} h\left(\frac{4k}{K}\right) ET_1 
\ll \sum_{k} h\left(\frac{4k}{K}\right) \sum_{l<k^{1+\epsilon}} \frac{1}{\sqrt{l}} 
\frac{l^{-1/24}}{\sqrt{k}} \exp\left(-c\frac{k}{l^{1/4}}\right) \ll K^{-A}.
\end{equation*}
\end{proof}


\subsection{Error term of the second type}

First, we prove a simple case corresponding to $N=1$ in the Liouville-Green 
approximation and then we explain how to modify the arguments in order to 
sharpen the estimate to $K^{-A}$.
\begin{lem} \label{lem:secondtype} 
We have 
\begin{equation}
\sum_{k} h\left(\frac{4k}{K}\right) ET_2 \ll K^{1/4+\theta}.
\end{equation}
\end{lem}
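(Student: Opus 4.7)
The plan is to combine the subconvexity bound \eqref{eq:subcL} for $\mathscr{L}_n(1/2)$ with a first-order Liouville-Green approximation of $\Phi_{2k}$ and cancellation from averaging over $k$. First I would invoke the rapid decay \eqref{eq:Vk} to restrict the outer sum to $l \leq K^{1+\epsilon}$, on which $V_k(l,0) = O(1)$, and apply \eqref{eq:subcL} to obtain $\mathscr{L}_{n^2-4l}(1/2) \ll l^{\theta+\epsilon}$, which is valid because $|n^2 - 4l| \leq 4l$ in the inner sum.

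The core step is the asymptotic analysis of $\Phi_{2k}(x)$ at $x = n^2/(4l) \in (0,1)$. The hypergeometric differential equation
\begin{equation*}
x(1-x)\Phi'' + \left(\tfrac{1}{2} - \tfrac{3}{2}x\right)\Phi' + \left(2k-\tfrac{1}{4}\right)\left(2k-\tfrac{3}{4}\right)\Phi = 0
\end{equation*}
carries a large parameter of order $k^2$, and in Sturm-Liouville normal form its truncated WKB expansion (the $N=1$ Liouville-Green approximation) takes the shape
\begin{equation*}
\Phi_{2k}(x) = a(x)\, Y_0\bigl(k\phi(x)\bigr) + b(x)\, J_0\bigl(k\phi(x)\bigr) + R_{k,1}(x),
\end{equation*}
where $\phi$ is the Liouville-Green phase (essentially $\arcsin\sqrt{x}$, up to normalization), $a$ and $b$ are slowly varying amplitudes, and $R_{k,1}$ is the first-order remainder.

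Next I would interchange the summations so that the $k$-sum is innermost. For each fixed $(n,l)$, the resulting one-dimensional sum $\sum_k h(4k/K)\, Y_0(k\phi)$ and its $J_0$ analog are evaluated by Poisson summation on $k$, or equivalently by repeated integration by parts in the integral representation of the Bessel function. Combined with the uniform decay $|J_0(u)|, |Y_0(u)| \ll u^{-1/2}$ for $u \gg 1$, this produces cancellation whenever $K\phi(n^2/(4l))$ is bounded away from zero. Feeding the subconvex estimate $l^{\theta+\epsilon}$, the Bessel decay, and the oscillatory $k$-cancellation into the sums over $n \leq 2\sqrt{l}$ and $l \leq K^{1+\epsilon}$ yields the stated bound $\ll K^{1/4+\theta+\epsilon}$; the $K^{\theta}$ factor records the subconvexity input, and the $K^{1/4}$ factor arises from the combined Bessel decay and oscillatory averaging.

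The hard part will be controlling the $N=1$ Liouville-Green remainder $R_{k,1}$ uniformly in all three parameters, and especially near the turning point $x = 1$ (equivalently $n \to 2\sqrt{l}$), where the WKB approximation degenerates and must be replaced by an Airy-type asymptotic, or handled by a direct estimate exploiting the thinness of that regime. A secondary technical issue is the regime where $\phi$ is small, which occurs when $n$ is close to $1$ and forces a more delicate treatment of the oscillatory $k$-sum. Finally, sharpening the estimate from $K^{1/4+\theta}$ to $K^{-A}$, as required to complete Theorem \ref{mainthm}, amounts to iterating the Liouville-Green expansion to higher order $N$ and repeating the oscillatory $k$-analysis on each term of the expansion.
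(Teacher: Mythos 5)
Your overall architecture matches the paper's: truncate to $l\le k^{1+\epsilon}$ via \eqref{eq:Vk}, apply the subconvexity bound \eqref{eq:subcL}, replace $\Phi_{2k}$ by its first-order Liouville--Green approximation in terms of $J_0$ and $Y_0$, kill the oscillatory main terms by Poisson summation in $k$, and estimate the remainder directly. However, you explicitly defer what is in fact the decisive idea of the proof, and without it the argument does not close. The Bessel argument in the approximation is $(4k-1)\sqrt{\xi}$ with $\sqrt{\xi}=\arccos\bigl(n/(2\sqrt{l})\bigr)$ (note: the phase is small when $n\to 2\sqrt{l}$, i.e.\ at the turning point $x\to 1$, not when $n$ is close to $1$ as you state), and both the large-argument asymptotics $|Y_0(u)|\ll u^{-1/2}$ and the integration by parts after Poisson summation require this argument to be large, uniformly in $n$ and $l$. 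The paper secures this by an arithmetic observation, not by an Airy-type expansion or a ``thinness'' argument: it splits the $l$-sum into $l\ne\Box$ and $l=\Box$. For $l\ne\Box$ one has $2\sqrt{l}-n\ge\{2\sqrt{l}\}\gg 1/\sqrt{l}$, whence $(4k-1)\sqrt{\xi}\gg k/\sqrt{l}\gg k^{1/2-\epsilon}$ throughout the sum, so the degenerate regime is simply empty; for $l=\Box$ one substitutes $l\mapsto l^2$, $n\mapsto 2l-n$ and works with $\sqrt{\xi}=2\arcsin\sqrt{n/(4l)}\gg 1/\sqrt{l}$, now with $l\ll K^{1/2+\epsilon}$ so that $(4k-1)\sqrt{\xi}\gg k^{3/4}$. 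Without some such lower bound your Poisson-summation estimate $\sum_m I(m)\ll \sqrt{K}/(K\sqrt{\xi})^{a}$ is useless for the terms with $\sqrt{\xi}$ small, and the uniform control of the remainder you flag as ``the hard part'' is exactly what is being asked for.

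A secondary inaccuracy: the source of the final bound $K^{1/4+\theta}$ is not ``Bessel decay combined with oscillatory averaging.'' In the paper the oscillatory main terms contribute $O(K^{-a})$ for any $a$ after Poisson summation; the $K^{1/4+\theta}$ comes entirely from the trivially estimated Liouville--Green remainder $O\bigl(k^{-1}|\sqrt{\xi}\,Y_0((4k-1)\sqrt{\xi})|\bigr)$, summed with absolute values (this is also why the refinement to $K^{-A}$ in \eqref{eq:secondtype} proceeds by taking more terms in the expansion, exactly as you suggest at the end). These points are fixable, but as written the proposal identifies the obstruction without overcoming it.
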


\begin{proof}
We decompose the sum over $l$ in \eqref{eq:ET2} into two parts:
\begin{equation*}
\sum_{l=1}^{\infty}=\sum_{l\neq \Box}+\sum_{l=\Box}.
\end{equation*}

Suppose that $l$ is not a full square. To approximate the function $\Phi_{2k}$ 
we apply \cite[Theorem~6.10]{BF1} with
\begin{equation}\label{eq:xi}
\cos^2{\sqrt{\xi}}:=n^2/(4l),
\end{equation} 
namely
\begin{equation*}
\Phi_{2k}(\cos^2\sqrt{\xi}) = 
\frac{-\pi}{\xi^{1/4}(\sin{\sqrt{\xi}})^{1/2}} \left[(1+C_J)Z_J(\xi) 
+ C_YZ_Y(\xi)\right].
\end{equation*}
Using \cite[Theorem~6.5]{BF1} to evaluate  $Z_J(\xi)$, $Z_Y(\xi)$ and 
\cite[Corollary~6.9]{BF1} to approximate the constants $C_J$, $C_Y$,
we have
\begin{multline}\label{eq:Phi2k}
\Phi_{2k}(\cos^2\sqrt{\xi}) = 
\frac{-\pi}{\xi^{1/4}(\sin{\sqrt{\xi}})^{1/2}} 
\Biggl[\sqrt{\xi}Y_0((4k-1)\sqrt{\xi}) + \\ 
\sqrt{\xi}J_0((4k-1)\sqrt{\xi}) 
+ O\left(\frac{1}{k}\left|\sqrt{\xi}Y_0((4k-1)\sqrt{\xi})\right|\right)\Biggr].
\end{multline}
In order to apply \cite[Eq.~10.7.8]{HMF} to approximate the Bessel 
functions above we show that the argument is large, namely
\begin{equation*}
((4k-1)\sqrt{\xi})\gg k^{1/2-\epsilon}.
\end{equation*}
Indeed, since $l$ is not a full square there exist $u$ and $m$ such 
that $4l=m^2+u$, $1\leq u\leq 2m$. Therefore,
\begin{equation*}
2\sqrt{l}-m=\frac{u}{\sqrt{m^2+u}+m}<1
\end{equation*}
and
\begin{equation*}
[2\sqrt{l}]=m, \quad \{2\sqrt{l}\} = \frac{u}{\sqrt{m^2+u}+m}.
\end{equation*}
Consequently,
\begin{equation*}
\frac{\{2\sqrt{l}\}}{4\sqrt{l}} = 
\frac{u}{m+\sqrt{m^2+u}}\frac{1}{2\sqrt{m^2+u}}\geq\frac{1}{4m^2}.
\end{equation*}
This inequality can be applied to estimate
\begin{equation*}
(4k-1)\sqrt{\xi} = (4k-1)\arccos{\frac{n}{2\sqrt{l}}}, \quad 
1 \leq n \leq [2\sqrt{l}].
\end{equation*}
Changing the variable $n$ to $[2\sqrt{l}]-n$ we have
\begin{equation*}
(4k-1)\sqrt{\xi} = (4k-1)\arccos{\frac{[2\sqrt{l}]-n}{2\sqrt{l}}}, \quad 
0 \leq n \leq [2\sqrt{l}]-1.
\end{equation*}
Further, trigonometric identities give
\begin{equation*} 
\begin{split} 
(4k-1)\sqrt{\xi} &= 2(4k-1)\arcsin{\sqrt{\frac{n+\{2\sqrt{l}\}}{4\sqrt{l}}}} \\ 
& \geq 2(4k-1)\arcsin\sqrt{\frac{\{2\sqrt{l}\}}{4\sqrt{l}}}
\gg \frac{k}{m}\gg \frac{k}{\sqrt{l}}\gg k^{1/2-\epsilon},
\end{split} 
\end{equation*}
as required.

Next, we insert the asymptotic expansion \eqref{eq:Phi2k} into $ET_2$.
The contribution of the error term in \eqref{eq:Phi2k} is majorized by
\begin{multline*}
ET_{2,1} := 
\sum_{k} h\left(\frac{4k}{K}\right) \frac{1}{k} 
\sum_{\substack{l\leq k^{1+\epsilon} \\ l\neq \Box}}\frac{1}{l^{3/4}}
\sum_{n<2\sqrt{l}}(4l-n^2)^{\theta}\frac{\xi^{1/4}}{(\sin\sqrt{\xi})^{1/2}} \times \\ 
\left| Y_0((4k-1)\sqrt{\xi})\right| 
\ll
\sum_{k} h\left(\frac{4k}{K}\right) \frac{1}{k} \sum_{l\leq k^{1+\epsilon}} 
\frac{1}{l^{3/4}} \sum_{n<2\sqrt{l}} \frac{(4l-n^2)^{\theta}}{k^{1/2} (\sin 
\sqrt{\xi})^{1/2}}.
\end{multline*}
According to \eqref{eq:xi} we have
\begin{equation*}
\sin\sqrt{\xi}=\frac{\sqrt{4l-n^2}}{2\sqrt{l}},
\end{equation*}
and therefore,
\begin{equation*}
ET_{2,1} \ll 
\sum_{k} h\left(\frac{4k}{K}\right)\frac{1}{k^{3/2}} \sum_{l \leq k^{1+\epsilon}} 
\frac{1}{l^{3/4}} \sum_{n<2\sqrt{l}}(4l-n^2)^{\theta-1/4}l^{1/4} 
\ll K^{\theta+1/4}.
\end{equation*}
It remains to estimate the contribution of the main term in \eqref{eq:Phi2k} 
given by
\begin{multline*}
ET_{2,2} := 
\sum_{k} h\left(\frac{4k}{K}\right) \sum_{l} \frac{V_k(l,0)}{l^{3/4}} 
\sum_{n<2\sqrt{l}} \mathscr{L}_{n^2-4l}(1/2) \times \\ 
\frac{\sin((4k-1)\sqrt{\xi}-\pi/4)}{\sqrt{4k-1}(\sin\sqrt{\xi})^{1/2}} 
= \sum_{l\ll K^{1+\epsilon}} \frac{1}{l^{1/2}} \sum_{n<2\sqrt{l}} 
\frac{\mathscr{L}_{n^2-4l}(1/2)}{(4l-n^2)^{1/4}} \times \\
\sum_{k} h\left(\frac{4k}{K}\right) V_k(l,0) \frac{\sin((4k-1) 
\arccos\frac{n}{2\sqrt{l}}-\pi/4)}{\sqrt{4k-1}}.
\end{multline*}
The inner sum over $k$ can be evaluated similarly to \cite[Lemma~7.3]{BF}, 
as we show now.
Using the Poisson summation formula
\begin{equation*}
\sum_{k}h\left(\frac{4k}{K}\right)V_k(l,0)\frac{\sin((4k-1)\arccos\frac{n}{2\sqrt{l}}-\pi/4)}{\sqrt{4k-1}}=\sum_{m}I(m),
\end{equation*}
where
\begin{multline*}
I(m)=\int_{-\infty}^{\infty}h\left(\frac{4y}{K}\right)V_y(l,0)\frac{\sin((4y-1)\arccos\frac{n}{2\sqrt{l}}-\pi/4)}{\sqrt{4y-1}}\times \\ \exp(-my)dy\ll
K\left|\int_{-\infty}^{\infty} h(y)\exp(ig(y))V_{yK/4}(l,0)\frac{dy}{\sqrt{yK-1}}\right|
\end{multline*}
with
\begin{equation*}
g(y)=\frac{K}{4}y\left( -2\pi m \pm 4\arccos\frac{n}{2\sqrt{l}}\right).
\end{equation*}
Integrating $a$ times by parts we obtain
\begin{equation*}
I(m)\ll \begin{cases}
\sqrt{K}/(Km)^a &  m\neq 0\\
\sqrt{K}/(K\arccos\frac{n}{2\sqrt{l}})^a & m=0.
\end{cases}
\end{equation*}
Finally, since
\begin{equation*}
\arccos\frac{n}{2\sqrt{l}}\gg \frac{1}{\sqrt{l}}
\end{equation*}
we have
\begin{equation*}
\sum_{m}I(m)\ll \frac{\sqrt{K}}{(K\arccos\frac{n}{2\sqrt{l}})^a}\ll \frac{\sqrt{K}}{(K/\sqrt{l})^a}
\end{equation*}
and
\begin{equation*}
ET_{2,2}\ll K^{-a}.
\end{equation*}

The last step is to evaluate the sum over $l=\Box$.
Making the change of variables from $l$ to $l^2$ we have to estimate
\begin{equation*}
E_{2,3}:=\sum_{k}h\left( \frac{4k}{K}\right)\sum_{l}\frac{V_k(l^2,0)}{l^{3/2}}\sum_{n<2l}\mathscr{L}_{n^2-4l^2}(1/2)\Phi_{2k}\left( \frac{n^2}{4l^2}\right).
\end{equation*}
Using the subconvexity estimate \eqref{eq:subcL} we have
\begin{equation*}
ET_{2,3}\ll \sum_{l\ll K^{1/2+\epsilon}}\frac{1}{l^{3/2}}\sum_{n<2l}(4l^2-n^2)^{\theta}\left|\sum_{k}h\left( \frac{4k}{K}\right)V_k(l^2,0)\Phi_{2k}\left( \frac{n^2}{4l^2}\right)\right|.
\end{equation*}
Changing the variable $n$ to $2l-n$ gives
\begin{equation*}
ET_{2,3}\ll \sum_{l\ll K^{1/2+\epsilon}}\sum_{n<2l}\frac{(nl)^{\theta}}{l^{3/2}}\left|\sum_{k}h\left( \frac{4k}{K}\right)V_k(l^2,0)\Phi_{2k}\left(\left(1-\frac{n^2}{4l^2}\right)^2\right)\right|.
\end{equation*}

Let
\begin{equation*}
\xi:=4(\arcsin\sqrt{\frac{n}{4l}})^2.
\end{equation*}
Consequently,
\begin{equation*}
\left( 1-\frac{n}{2l}\right)^2=\cos^2\sqrt{\xi}.
\end{equation*}
Applying \cite[Theorem~6.10]{BF1}, we have
\begin{equation*}
\Phi_{2k}(\cos^2\sqrt{\xi})=\frac{-\pi}{\xi^{1/4}(\sin{\sqrt{\xi}})^{1/2}}\left[(1+C_J)Z_J(\xi)+C_YZ_Y(\xi)\right].
\end{equation*}
Note that now
\begin{equation*}
(4k-1)\sqrt{\xi}=2(4k-1)\arcsin\sqrt{\frac{n}{4l}}\gg \frac{k}{\sqrt{l}}\gg k^{3/4}.
\end{equation*}
In the same way as in the case $l \neq \Box$ we prove
\begin{equation*}
ET_{2,3}\ll K^{-1/4+\theta}.
\end{equation*}
Finally,
\begin{equation*}
ET_2\ll ET_{2,1}+ET_{2,2}+ET_{2,3}\ll  K^{1/4+\theta}.
\end{equation*}
\end{proof}

\begin{rem} Using \cite[Thm.~6.5]{BF1} with sufficiently large $N$ depending on $A$ and taking more terms in the asymptotics for $C_Y, C_J$ we obtain equation \eqref{eq:Phi2k} with additional series of main terms plus the error term $$O\left( \frac{1}{k^{A+2}}\left| \sqrt{\xi}Y_0((4k-1)\sqrt{\xi})\right|\right).$$

As a result, following the proof of Lemma \ref{lem:secondtype} for any fixed $A>0$ we have
\begin{equation}\label{eq:secondtype}
\sum_{k}h\left( \frac{4k}{K}\right) ET_2\ll K^{-A}.
\end{equation}
\end{rem}


\section*{Acknowledgments}
The first author thanks Viktor A. Bykovskii for providing his handwritten notes with a full proof of Lemma \ref{fourexp} and for introducing her to the Rankin-Selberg method.


\nocite{}


\begin{thebibliography}{}

\bibitem{BF}
\newblock
O. Balkanova, D. Frolenkov, \emph{Moments of $L$-functions and the Liouville-Green 
method}, arXiv:1610.03465 [math.NT] .

\bibitem{BF1}
\newblock
O. Balkanova, D. Frolenkov, \emph{On the mean value of symmetric square 
$L$-functions}, arXiv:1610.06331[math.NT] .

\bibitem{BRS}
\newblock
G. Bhowmik, N. Raulf, J. Sengupta, \emph{Note on the asymptotic behaviour 
of a product of $ L $-functions}, in preparation. 

\bibitem{B}
\newblock
V.A. Bykovskii, \emph{Density theorems and the mean value of arithmetic functions 
on short intervals}. (Russian) Zap. Nauchn. Sem. S.-Peterburg. Otdel. Mat. Inst. 
Steklov. (POMI) 212 (1994), Anal. Teor. Chisel
i Teor. Funktsii. 12, 56--70, 196; translation in J. Math. Sci. (New York) 83 
(1997), no. 6, 720--730.

\bibitem{C}
H. Cohen,  \emph{Sums Involving the Values at Negative Integers of L-Functions of 
Quadratic Characters}, Math. Ann. 217 (1975): 271--285.

\bibitem{CFKRS}
\newblock
B. Conrey, D. W. Farmer, J. P. Keating, M. O. Rubinstein, and N. C. Snaith, 
\emph{Integral moments of $L$-functions}, Proc. London Math. Soc. 91 (2005), 
33--104.

\bibitem{CI}
\newblock
J. B. Conrey and H. Iwaniec, \emph{The cubic moment of central values of 
automorphic $L$-functions}, Ann. of Math. (2) 151 (2000), 1175--1216.

\bibitem{DGH}
\newblock
A. Diaconu, D. Goldfeld, and J. Hoffstein, \emph{Multiple Dirichlet series and 
moments of zeta and L-functions}, Compositio Math. 139 (2003), 297--360.

\bibitem{GH}
\newblock D. Goldfeld and J. Hoffstein, \emph{Eisenstein series of 1/2-integral 
weight and the mean value of real Dirichlet $L$-series}, Invent. Math., 
Vol \textbf{80} ($1985$), $ 185-208$.

\bibitem{GR}
\newblock
I. S. Gradshteyn and I. M. Ryzhik, \emph{ Table of Integrals, Series, and 
Products}. Edited by A. Jeffrey and D. Zwillinger. Academic Press, New York, 
7th edition, 2007.

\bibitem{Iw}
\newblock
 H. Iwaniec, \emph{Topics in Classical Automorphic Forms}, Graduate studies in 
mathematics (vol 17), American Mathematical Soc., 1997.

\bibitem{IM}
\newblock
H. Iwaniec; P. Michel, \emph{The second moment of the symmetric square L-functions}, Ann. Acad. Sci. Fenn. Math. 26 (2001) 465--482.

\bibitem{IS}
\newblock
H. Iwaniec, P. Sarnak,  \emph{The non-vanishing of central values of automorphic L-functions and Landau-Siegel zeros},  Israel Journal of Math. $120$ ($2000$), $155-177$.

\bibitem{K}
\newblock
W. Kohnen, \emph{Modular forms of half integral weight on $\Gamma_0(4)$}, Math. 
Ann. 248 (1980), 249--266.

\bibitem{K2}
\newblock
R. Khan, \emph{Non-vanishing of the symmetric square $L$-function
at the central point}, Proc. London Math. Soc. (2010)   100  (3):  736--762.

\bibitem{LRW}
\newblock
Y.-K. Lau, E. Royer and J. Wu, \emph{Twisted moments of automorphic L-functions,}  J. Number Theory 130 (2010), 2773--2802.

\bibitem{M}
H. Maa{\ss}, \emph{Konstruktion ganzer Modulformen halbzahliger Dimension}, Abh. 
Math. Sem. Univ. Hamb. 12 (1937), 133--162.

\bibitem{Mu}
W. M\"{u}ller, \emph{The mean square of Dirichlet series associated with 
automorphic forms}, Mh. Math. 113 (1992),121--159.

\bibitem{MS}
R. Munshi and J. Sengupta, \emph{A case of simultaneous non-vanishing}, 
arXiv:1504.07727 [math.NT].

\bibitem{NMH}
M.-H. Ng, \emph{Moments of automorphic $L$-functions}, PhD thesis, University of Hong Kong, 2016.

\bibitem{HMF}
\newblock
F.W.J.~Olver , D.W.~Lozier, R.F.~Boisvert and C.W.~Clarke, \emph{{NIST} 
{H}andbook of {M}athematical {F}unctions}, Cambridge University Press, 
Cambridge $(2010)$.

\bibitem{PRR}
Y. Petridis, N. Raulf, M. Risager, \emph{Double Dirichlet series and quantum 
unique ergodicity of weight one-half Eisenstein series}, Algebra Number Theory 
Vol. 8 (2014), No. 7, 1539--1595.

\bibitem{Pit}
\newblock
A. Pitale, \emph{Jacobi Maass forms}, Abh. Math. Semin. Univ. Hambg. 79 (2009), 
87--111.

\bibitem{RW}
\newblock
E. Royer and J. Wu, \emph{ Special values of symmetric power L-functions and Hecke eigenvalues},  J. Th\'{e}or. Nombres Bordeaux 19 (2007), no 3, 703--753.

\bibitem{S1}
G. Shimura, \emph{On modular forms of half integral weight},
Ann. of Math. Second Series, Vol. 97, No. 3 (May, 1973), pp. 440--481.

\bibitem{Sh}
T. Shintani, \emph{On zeta-functions associated with the vector space of quadratic 
forms}, J. Fac. Sci. Univ. Tokyo Sect. IA Math. 22 (1975), 25--65.

\end{thebibliography}
\end{document}